\newtheorem{theorem}{Theorem}
\newtheorem{corollary}[theorem]{Corollary}
\newtheorem{lemma}[theorem]{Lemma}
\newtheorem{proposition}[theorem]{Proposition}
\newtheorem{conjecture}[theorem]{Conjecture}
\newtheorem*{conjecture-nonum}{Conjecture}
\theoremstyle{definition}
\newtheorem*{question}{Question}
\newtheorem*{problem}{Problem}
\begin{document}
\title{Words with many palindrome pair factors}

\author{Adam Borchert and Narad Rampersad}

\address{Department of Mathematics and Statistics \\
University of Winnipeg \\
515 Portage Avenue \\
Winnipeg, Manitoba R3B 2E9 (Canada)}

\email{adamdborchert@gmail.com, n.rampersad@uwinnipeg.ca}

\thanks{The first author is supported by an NSERC USRA, the second by an NSERC Discovery Grant.}

\subjclass[2000]{68R15}

\date{\today}

\begin{abstract}
Motivated by a conjecture of Frid, Puzynina, and Zamboni,
we investigate infinite words with the property that for infinitely many $n$,
every length-$n$ factor is a product of two palindromes.  We show that
every Sturmian word has this property, but this does not characterize
the class of Sturmian words.  We also show that the Thue--Morse word
does not have this property.  We investigate finite words with the
maximal number of distinct palindrome pair factors and characterize
the binary words that are not palindrome pairs but have the property
that every proper factor is a palindrome pair.
\end{abstract}

\maketitle

\section{Introduction}
The \emph{palindromic length} of a word $x$ is the least $\ell$ such
that $x = p_1p_2\cdots p_\ell$, where each $p_i$ is a palindrome (we
consider the empty word a palindrome).  Frid, Puzynina, and Zamboni
\cite{FPZ13} made the following, remarkable conjecture:

\begin{conjecture-nonum}[Frid--Puzynina--Zamboni]
Let ${\bf w}$ be an infinite word.  If there exists $k$ such that
every factor of $w$ has palindromic length at most $k$, then ${\bf w}$ is
ultimately periodic.
\end{conjecture-nonum}

In this paper we focus on words with palindromic length at most
$2$:  these are called \emph{palindrome pairs} and have been studied
before \cite{GSS15,HM15,Kem82}.  Assuming that the
Frid--Puzynina--Zamboni Conjecture is true, there is no aperiodic word
with the property that all of its factors are palindrome pairs.  We
therefore investigate the following question, with the hope that it
will give some insight into the conjecture.

\begin{quotation}
Are there aperiodic words {\bf w} with the property ({\bf PP}) that for
\emph{infinitely many $n$}, every length-$n$ factor of ${\bf w}$ is a
palindrome pair?
\end{quotation}

As we will see, all Sturmian words have property {\bf PP}, but this does
not characterize the Sturmian words: there are other words with this
property as well.  We also show that the Thue--Morse word does not have
property {\bf PP}.  We then look at the analogue of (finite) \emph{rich words}
for palindrome pairs, rather than palindromes.  Finally, we
characterize the finite binary words that are not palindrome pairs, but have
the property that each of their proper factors is a palindrome pair.

\section{Palindrome pairs in Sturmian words}\label{sturmian}
In this section we show that every Sturmian word ${\bf s}$ has
property {\bf PP} and we characterize the lengths $n$ for which every
length-$n$ factor of ${\bf s}$ is a palindrome pair.  We begin with
some preliminaries concerning Sturmian words.

Let $\alpha$ be an irrational real number (fixed for the remainder of
this section) with $0<\alpha<1$ and let $\alpha$ have the continued
fraction expansion $\alpha = [0; a_1 + 1, a_2, a_3, \ldots]$.  We
define the following sequence of words:
\[
  s_0 = 1, \quad s_1 = 0, \quad s_n = s_{n-1}^{a_{n-1}} s_{n-2} \text{
  for } n \geq 2.
\]
	
Since $s_n$ is a prefix of $s_{n+1}$ for all $n\geq 1$, this sequence
converges to an infinite word ${\bf c}_\alpha$ called the
\emph{characteristic (or standard) Sturmian word} with slope $\alpha$.
An infinite word ${\bf s}$ is a \emph{Sturmian word} with slope
$\alpha$ if it has the same set of factors as ${\bf c}_\alpha$.  Since
we are only concerned with the language of factors of ${\bf s}$, we
will assume without loss of generality that ${\bf s} = {\bf c}_\alpha$.

Recall that words $x$ and $y$ are \emph{conjugates} if $x=uv$ and
$y=vu$ for some words $u$ and $v$.

	\begin{lemma}
		\label{pp_conj}
		Any conjugate of a palindrome pair is a palindrome pair.
	\end{lemma}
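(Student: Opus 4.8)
The plan is to go through the characterization that a word $w$ is a palindrome pair if and only if $w$ and its reversal $w^R$ are conjugate; granted this, the lemma is almost immediate, since conjugacy is an equivalence relation that interacts well with reversal.

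First I would prove the characterization. For the forward direction, if $w = pq$ with $p$ and $q$ palindromes (either possibly empty), then $w^R = q^R p^R = qp$, which is a conjugate of $w$. For the converse, suppose $w = uv$ with $w^R = vu$. Then $v^R u^R = (uv)^R = w^R = vu$; comparing the first $|v|$ symbols of both sides (legitimate since those are $v^R$ on the left and $v$ on the right, as $|v| \le |w|$) gives $v^R = v$, and cancelling then gives $u^R = u$. Hence $u$ and $v$ are palindromes and $w = uv$ is a palindrome pair.

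Next I would record two elementary facts about conjugacy: it is an equivalence relation on words, and it commutes with reversal in the sense that if $w'$ is a conjugate of $w$ then $(w')^R$ is a conjugate of $w^R$ --- indeed, writing $w = xy$ and $w' = yx$, we have $w^R = y^R x^R$ and $(w')^R = x^R y^R$. Now let $w$ be a palindrome pair and $w'$ a conjugate of $w$. By the characterization, $w$ is conjugate to $w^R$; since $w'$ is conjugate to $w$, it is also conjugate to $w^R$, and $(w')^R$ is conjugate to $w^R$ as well. Transitivity then gives that $w'$ is conjugate to $(w')^R$, so $w'$ is a palindrome pair by the characterization.

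The proof is short, and the only point requiring any care is the letter-comparison step in the converse of the characterization (and keeping track of the degenerate cases where $u$ or $v$ is empty, which are harmless); I expect that to be the ``main obstacle,'' though it is entirely routine. As an alternative to the characterization, one can argue directly: it suffices to treat a single cyclic shift $a w_1 \mapsto w_1 a$, and a brief case analysis on whether the first palindrome $p$ in a factorization $w = a w_1 = pq$ is empty, a single letter, or of the form $a p' a$ handles every case (yielding $w_1 a$ equal to $qa$, $p'(aqa)$, or $w_2(aa)$ respectively); this avoids the characterization at the cost of a slightly longer case check.
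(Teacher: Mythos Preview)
Your argument is correct. The characterization ``$w$ is a palindrome pair if and only if $w$ is conjugate to $w^R$'' is valid, and your proof of it is clean: the converse step comparing the length-$|v|$ prefixes of $v^R u^R$ and $vu$ to get $v^R=v$ (and then $u^R=u$) is exactly right, with the empty cases causing no trouble. Once that characterization is in hand, the lemma drops out of transitivity and the fact that reversal respects conjugacy.

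This is a genuinely different route from the paper's. The paper argues directly on a single cyclic shift: from $w=A_0B_0$ with $A_0=a_1\cdots a_r$, $B_0=b_1\cdots b_s$ palindromes, it observes that the shift $a_2\cdots a_r b_1\cdots b_s a_1$ factors as $(a_2\cdots a_{r-1})(a_r b_1\cdots b_s a_1)$, both palindromes (with a trivial tweak when $r\le 1$), and then iterates. In other words, the paper's proof is essentially the ``alternative'' you sketch at the end. Your main approach trades that explicit case analysis for a conceptual detour through the equivalence relation, which has the pleasant side effect of recording a useful characterization of palindrome pairs; the paper's approach is more self-contained and gives, along the way, an explicit palindrome-pair factorization of each conjugate.

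One minor remark on your alternative sketch: the three outputs you list (``$qa$, $p'(aqa)$, $w_2(aa)$'') don't line up in the stated order with the three cases (``$p$ empty, $|p|=1$, $p=ap'a$''); the factorization $qa$ belongs to the case $|p|=1$, and the $p$-empty case is the one producing $w_2(aa)$ after writing the palindrome $w=aw_2a$. This is cosmetic and does not affect your main argument.
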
	
	
	\begin{proof}
		Let $A_0 B_0$ be a palindrome pair where $A_0 = a_1 a_2 \cdots a_r$ and $B_0 = b_1 b_2 \cdots b_s$ are palindromes. As the result is trivial for the empty word, we may assume without loss of generality that $r \geq 1$ and $s \geq 0$. If $r = 1$, note that $B_0 a_1$ is a palindrome pair. Assume that $r\geq2$. Then, since $A_1 = a_2 a_3\cdots a_{r-1}$ and $B_1 = a_r b_1 b_2 \cdots b_s a_1$ are palindromes it follows that $A_1 B_1$ is a palindrome pair. The result follows on repeated application of this argument.
	\end{proof}

	\begin{lemma} \cite{F06}
		\label{all_pal_pref}
		A word $p$ is a palindromic prefix of ${\bf s}$ if and
                only if one of the following hold:
		\begin{itemize}
			\item $p$ is a factor of $s_1^{a_1}$.
			\item For either $u = p01$ or $u = p10$ we have $u = s_{n-1}^ks_{n-2}$ for some $n \geq 2$ and $1 \leq k \leq a_{n-1}$.
		\end{itemize}
	\end{lemma}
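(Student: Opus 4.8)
Since the lemma concerns the characteristic word ${\bf s} = {\bf c}_\alpha$, the plan is to describe \emph{all} palindromic prefixes of ${\bf s}$ explicitly and then match them against the two bullets. I would first record the classical structural fact about standard words: for $m \geq 1$ the words $s_m s_{m-1}$ and $s_{m-1} s_m$ agree except in their last two letters, which are interchanged, and their common prefix (of length $|s_m| + |s_{m-1}| - 2$) is a palindrome; this follows by a short induction from the recurrence $s_m = s_{m-1}^{a_{m-1}}s_{m-2}$ and the bases $s_0 = 1$, $s_1 = 0$. The same argument, applied to the intermediate standard words, shows that for $n \geq 2$ and $1 \leq k \leq a_{n-1}$ the word $s_{n-1}^k s_{n-2}$ has length at least $2$, ends in two distinct letters, and becomes a palindrome when these last two letters are deleted. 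Write $\pi_{n,k}$ for that palindrome, so $s_{n-1}^k s_{n-2}$ equals $\pi_{n,k}01$ or $\pi_{n,k}10$; which of the two occurs depends only on the parity of $n$, so in the lemma exactly one of the options ``$u = p01$'', ``$u = p10$'' can apply.

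For the ``if'' direction I must check that every $0^j$ with $0 \leq j \leq a_1$, and every $\pi_{n,k}$, is a palindromic prefix of ${\bf s}$. The words $0^j$ are palindromes and prefixes of $s_1^{a_1} = 0^{a_1}$, hence of $s_2 = s_1^{a_1}s_0$, hence of ${\bf s}$. For $\pi_{n,k}$: since $s_{n-2}$ is a prefix of $s_{n-1}$ (from the recurrence for $s_{n-1}$; the lone exception $n = 3$ with $a_1 = 0$ is treated directly, using that ${\bf s}$ then begins with $1^{a_2}0$), the word $s_{n-1}^k s_{n-2}$ is a prefix of $s_{n-1}^{k+1}$ for $k < a_{n-1}$ and equals $s_n$ for $k = a_{n-1}$, so in either case it is a prefix of ${\bf s}$, and then so is $\pi_{n,k}$, which is a prefix of it.

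The ``only if'' direction is the substance of the lemma. Here I would invoke the known description of the palindromic prefixes of a standard Sturmian word via iterated right palindromic closure (this is essentially the content of \cite{F06}): writing $u^{(+)}$ for the shortest palindrome having $u$ as a prefix, and $\mathrm{Pal}$ for the map defined by $\mathrm{Pal}(\varepsilon) = \varepsilon$ and $\mathrm{Pal}(w\delta) = (\mathrm{Pal}(w)\delta)^{(+)}$, the palindromic prefixes of ${\bf s}$ are exactly the words $\mathrm{Pal}(v)$ as $v$ ranges over the prefixes of the directive sequence $\Delta = 0^{a_1}1^{a_2}0^{a_3}\cdots$ of ${\bf c}_\alpha$. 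It then suffices to prove, by induction on $i$, that $\mathrm{Pal}$ of the length-$i$ prefix of $\Delta$ equals $\pi_{n,k}$, where $(n,k)$ is the $(i+1)$-st term of the enumeration
\[
(2,1),(2,2),\ldots,(2,a_1),\ (3,1),(3,2),\ldots,(3,a_2),\ (4,1),\ldots
\]
(the base case is $\mathrm{Pal}(\varepsilon) = \varepsilon = \pi_{2,1}$, since $s_1 s_0 = 01$; when $a_1 = 0$ the enumeration simply begins with $(3,1)$). Once this is proved, every palindromic prefix $p$ of ${\bf s}$ is some $\pi_{n,k}$, i.e.\ $p01$ or $p10$ equals $s_{n-1}^k s_{n-2}$ for a valid pair $(n,k)$, which is the second bullet; this also shows the first bullet is a special case (each $0^j$ with $j \leq a_1-1$ is $\pi_{2,j+1}$, and $0^{a_1}$ is $\pi_{3,1}$), so the characterization is complete. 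The heart of the matter, and the main obstacle, is the inductive step: one appends the next letter of $\Delta$ to $\pi_{n,k}$, takes its palindromic closure, and — using the closure recurrence (which behaves differently according as that letter repeats the previous one or starts a new block of $\Delta$) together with $s_n = s_{n-1}^{a_{n-1}}s_{n-2}$ — verifies that the result is $\pi_{n,k+1}$ when $k < a_{n-1}$ and $\pi_{n+1,1}$ when $k = a_{n-1}$. Keeping these two recurrences synchronized through the block structure of $\Delta$, and handling the degenerate cases ($a_1 = 0$, and the small values of $n$ where some $s_i$ have length $1$), is where the real care is needed.
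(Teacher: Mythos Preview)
The paper does not prove this lemma at all: it is simply quoted from Fischler~\cite{F06}, so there is no in-paper argument to compare against. Your plan --- reduce to the iterated-palindromic-closure description of the palindromic prefixes of a characteristic Sturmian word, and then identify $\mathrm{Pal}$ of each prefix of the directive sequence $\Delta = 0^{a_1}1^{a_2}0^{a_3}\cdots$ with the ``central word'' $\pi_{n,k}$ obtained by stripping the last two letters from $s_{n-1}^k s_{n-2}$ --- is exactly the circle of ideas behind Fischler's (and de~Luca's) treatment, and it is correct. One small point of presentation: when you argue that $s_{n-1}^k s_{n-2}$ is a prefix of ${\bf s}$ via ``$s_{n-2}$ is a prefix of $s_{n-1}$'', this fails not only for $n=3$ with $a_1=0$ but also for $n=2$ (since $s_0=1$ is not a prefix of $s_1=0$); however the $n=2$ case is already handled by your first bullet, since $\pi_{2,k}=0^{k-1}$ with $k\leq a_1$, so you should say so explicitly rather than call the other case the ``lone'' exception.
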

	
	\begin{lemma} \cite{dM94}
		\label{pal_pairs}
		For $n \geq 2$ and $1 \leq k \leq a_{n-1}$, $s_{n-1}^ks_{n-2}$ is a palindrome pair.
	\end{lemma}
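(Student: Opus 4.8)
The plan is to strip the last two letters off $s_{n-1}^k s_{n-2}$ using Lemma~\ref{all_pal_pref}, exposing a central word, and then to reassemble the whole word as a product of two palindromes via the classical double factorization of central words. First I would recall the standard fact that, for $n\ge 2$, every word $s_{n-1}^k s_{n-2}$ with $1\le k\le a_{n-1}$ ends in $01$ or in $10$ (a short induction on the recurrence $s_m=s_{m-1}^{a_{m-1}}s_{m-2}$, or an appeal to the literature). So I may write $s_{n-1}^k s_{n-2}=p\,xy$ with $\{x,y\}=\{0,1\}$. Applying the ``if'' direction of Lemma~\ref{all_pal_pref} to $p01$ or $p10$ shows that $p$ is a palindromic prefix of $\mathbf{s}$; in particular $p$ is a palindrome, and since every palindromic prefix of $\mathbf{c}_\alpha$ is a central word, $p$ is central.

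Next I would split on the structure of central words. If $p$ is a power $a^j$ of a single letter (including the empty word, $j=0$), then $s_{n-1}^k s_{n-2}$ is a very short word of the form $a^j01$ or $a^j10$, and one checks on the spot that it is a palindrome pair (e.g.\ $0^{j}01=0^{j+1}\cdot1$ and, for $j\ge1$, $0^{j}10=0^{j-1}\cdot010$, and symmetrically with the roles of $0$ and $1$ exchanged). Otherwise $p$ contains both letters, and here I would invoke the de Luca--Mignosi structure theorem for central words \cite{dM94}: such a $p$ factors as $p=A\,01\,B$ with $A$ and $B$ palindromes (indeed central words). Since $p$, $A$ and $B$ are palindromes, reversing this identity yields also $p=B\,10\,A$. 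Now, if $xy=10$ I would write $s_{n-1}^k s_{n-2}=A\cdot(01\,B\,10)$, noting that $01\,B\,10$ is a palindrome because $B$ is; and if $xy=01$ I would use the other factorization to write $s_{n-1}^k s_{n-2}=B\cdot(10\,A\,01)$, once more a product of two palindromes. In either case $s_{n-1}^k s_{n-2}$ is a palindrome pair, as desired.

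I expect the only real obstacle to be getting the double factorization $p=A\,01\,B=B\,10\,A$ of a central word (one containing both letters) in exactly the form needed; everything else is bookkeeping. The point to be careful about is that $A$ and $B$ can be taken palindromic --- but this is automatic once some factorization $p=A\,01\,B$ with palindromic $A,B$ is known, because $p$ itself is a palindrome and reversal swaps the two descriptions. A secondary, routine point is confirming that $s_{n-1}^k s_{n-2}$ never ends in $00$ or $11$, which the induction mentioned above settles. One could alternatively try to reach the conclusion through Lemma~\ref{pp_conj} by exhibiting a convenient conjugate of $s_{n-1}^k s_{n-2}$ that is manifestly a palindrome pair, but the direct route above seems cleaner.
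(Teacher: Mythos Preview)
Your argument is correct. Note, however, that the paper does not supply its own proof of this lemma: it is stated with a citation to de~Luca--Mignosi \cite{dM94} and left at that. Your route---strip the last two letters via Lemma~\ref{all_pal_pref} to expose a central word $p$, then invoke the classical double factorization $p=A\,01\,B=B\,10\,A$ (with $A,B$ palindromes) to reassemble $p\cdot xy$ as $A\cdot(01B10)$ or $B\cdot(10A01)$---is precisely the central-word machinery developed in \cite{dM94}, so you are reconstructing the standard proof rather than departing from it. The side checks you flag (that $s_{n-1}^{k}s_{n-2}$ always ends in $01$ or $10$, and the direct treatment of the case where $p$ is a power of a single letter) are routine and correctly handled.
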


If $w$ is a word (finite or infinite), the notation $w[i:j]$ indicates
the factor of $w$ beginning at position $i$ and ending at position $j-1$
in $w$.  Unless otherwise stated, we assume that positions are indexed
starting with $0$.

	\begin{lemma}
		\label{left_special}
		Let $w$ be a left special factor of ${\bf s}$ such that neither $w[0:|w|-1]$ nor $w$ is a palindrome. The words $1w$ and $0w$ are not both palindrome pairs.
	\end{lemma}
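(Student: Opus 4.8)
The plan is to argue by contradiction. Suppose that \emph{both} $0w$ and $1w$ are palindrome pairs, and write $0w = P_1Q_1$ and $1w = P_2Q_2$ with $P_1,Q_1,P_2,Q_2$ palindromes. First I would rule out empty pieces: if $P_1$ or $Q_1$ is empty then $0w$ is itself a palindrome, and deleting its first and last letters exhibits $w[0:|w|-1]$ as a palindrome, against the hypothesis; the same argument applied to $1w$ shows $P_2,Q_2$ are nonempty. Since $w$ is left special and ${\bf s}={\bf c}_\alpha$, $w$ is a prefix of ${\bf s}$ and therefore begins with $0$; a short argument then forces $|P_1|\ge 2$ and $|P_2|\ge 3$ (for instance $|P_1|=1$ gives $P_1=0$ and $w=Q_1$, a palindrome, while $P_2\in\{1,11\}$ is impossible since $1w$ begins $10$). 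Reading off first and last letters, $P_1=0\pi_1 0$ and $P_2=1\pi_2 1$ with $\pi_1,\pi_2$ palindromes; and since $\pi_1 0$ and $\pi_2 1$ are prefixes of $w$, both $\pi_1$ and $\pi_2$ are palindromic prefixes of ${\bf s}$, to which Lemma~\ref{all_pal_pref} applies, while $0\pi_1 0 = P_1$ and $1\pi_2 1 = P_2$ are factors of ${\bf s}$.

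Next I would compare the two factorizations. From $w=\pi_1 0 Q_1=\pi_2 1 Q_2$, the words $\pi_1 0$ and $\pi_2 1$ are both prefixes of $w$; equal lengths would force $\pi_1 0=\pi_2 1$, which is absurd, so we may assume $|\pi_1|<|\pi_2|$ (the case $|\pi_1|>|\pi_2|$ being symmetric). Then $\pi_1 0$ is a prefix of $\pi_2$, say $\pi_2=\pi_1 0\rho$, and comparing suffixes of $w$ gives $Q_1=\rho 1 Q_2$. Now $Q_2$, being a suffix of the palindrome $Q_1$, is also a prefix of $Q_1$, so $Q_1$ has period $p:=|\pi_2|-|\pi_1|$; and reversing $\pi_2=\pi_1 0\rho$ shows $\pi_1$ is a border of $\pi_2$, so $\pi_2$, and hence its prefix $\pi_1$, also has period $p$. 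In other words, $w=\pi_1\cdot 0\cdot Q_1$ is a period-$p$ prefix and a period-$p$ suffix glued by a single letter, where in addition $\pi_1,\pi_2$ are palindromic prefixes of ${\bf s}$ and $0\pi_1 0$, $1\pi_2 1$ occur in ${\bf s}$.

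The heart of the proof --- and the step I expect to be the main obstacle --- is to squeeze a contradiction out of these constraints using the arithmetic of palindromic prefixes of a Sturmian word. Here I would feed $\pi_1$ and $\pi_2$ into Lemma~\ref{all_pal_pref}: each is either $0^j$ with $0\le j\le a_1$, or else $\pi_i 01$ (respectively $\pi_i 10$) equals some $s_{m-1}^k s_{m-2}$. Using that $0\pi_1 0$ and $1\pi_2 1$ occur in ${\bf s}$, that $\pi_1 0$ is a prefix of $\pi_2$, and that $Q_1$ has period $p$, one should be able to constrain the pair $(\pi_1,\pi_2)$ tightly enough that in every case $w$ (or $w[0:|w|-1]$) is forced to be a palindrome --- the desired contradiction; Lemma~\ref{pp_conj} lets one transfer the palindrome-pair property to the conjugate $w0$ of $0w$ where that is convenient. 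The delicate points will be making the case split exhaustive, handling the degenerate configurations (such as $\pi_1$ empty), and reconciling the period coming from the prefix side of $w$ with the one coming from the suffix side.
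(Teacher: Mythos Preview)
Your setup is sound up to the point where you write $P_1=0\pi_1 0$, $P_2=1\pi_2 1$ with $\pi_1,\pi_2$ palindromic prefixes of ${\bf s}$, but from there on the argument is incomplete: you never actually extract the contradiction, and you yourself flag the remaining step as ``the main obstacle'' whose case analysis will be ``delicate.'' As written, this is a plan with a hole in the middle rather than a proof.

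The missing idea is much simpler than the periodicity machinery you are building. Lemma~\ref{all_pal_pref} tells you more than which words are palindromic prefixes of ${\bf s}$: since each $s_{n-1}^k s_{n-2}$ is itself a prefix of ${\bf s}$, the lemma determines the \emph{two letters that follow} every palindromic prefix $p$ in ${\bf s}$ --- they are either $01$ or $10$ (with the short prefixes $0^j$ handled by inspection). Apply this to $\pi_2$: since $\pi_2 1$ is a prefix of ${\bf s}$, the next letter of ${\bf s}$ after $\pi_2 1$ is forced to be $0$, so $Q_2$ begins with $0$, hence (being a palindrome) ends with $0$, hence $w$ ends with $0$. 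The symmetric argument applied to $\pi_1$ (since $\pi_1 0$ is a prefix of ${\bf s}$) forces $Q_1$ to begin with $1$, hence $w$ ends with $1$. That is already the contradiction. There is no need to compare $\pi_1$ with $\pi_2$, to introduce a period $p$, or to run any arithmetic on the directive sequence; once you notice that Lemma~\ref{all_pal_pref} pins down the letter immediately after $\pi_i a$, the whole lemma is three lines.
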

	
	\begin{proof}
		Let $w$ be such a factor. Since $w$ is left special,
                it is well known that $w$ occurs as a prefix of ${\bf
                  s}$. Assume without loss of generality that $1w$ is
                a palindrome pair. Write $1w = AB$ for palindromes $A$
                and $B$. Based on our hypotheses on $w$, it follows
                from Lemma \ref{all_pal_pref} that $|A| \ne 0,
                1$. Hence, we may write $A = 1P_11$ for a palindrome
                $P_1$. Since $P_1$ is a palindromic prefix of ${\bf
                  s}$ it then follows from Lemma \ref{all_pal_pref}
                that $B$ has prefix $0$. Then, since $B$ is a
                palindrome it follows that $B$ (and hence, $w$) has
                suffix $0$. If $0w$ were a palindrome pair it would
                follow by a similar argument that $w = 0p_201\ldots 1$
                and thus $w$ has suffix $1$, which is impossible.
	\end{proof}
	
	\begin{lemma}
		\label{exc_non_pp} 
		Let $u = s_{n-1}^ks_{n-2}$ for some $n \geq 4$ and $1
                \leq k \leq a_{n-1}$ where $|u| \geq 3 a_1 + 6$. There
                is a factor of ${\bf s}$ of length $|u| - 1$ which is not a palindrome pair.
	\end{lemma}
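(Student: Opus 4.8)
The plan is to exhibit one explicit factor $w$ of $\mathbf{s}$ of length $|u|-1$ and to verify directly that $w$ is not a palindrome pair. First I would record that $u$ is a prefix of $\mathbf{c}_\alpha = \mathbf{s}$: since $s_{n-2}$ is a prefix of $s_{n-1}$, the word $u = s_{n-1}^k s_{n-2}$ is a prefix of $s_{n-1}^{a_{n-1}} s_{n-2} = s_n$, which is a prefix of $\mathbf{s}$; hence every factor of $u$ is a factor of $\mathbf{s}$. By Lemma~\ref{all_pal_pref} applied to $u = s_{n-1}^k s_{n-2}$, the word $p := u[0:|u|-2]$ is a palindromic prefix of $\mathbf{s}$ and $u = pxy$ with $\{x,y\} = \{0,1\}$; note that $p$ begins and ends with $0$. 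Comparing the length $|u| = k|s_{n-1}| + |s_{n-2}|$ with the possible lengths $a_1$ and $j|s_{m-1}| + |s_{m-2}| - 2$ of the palindromic prefixes of $\mathbf{s}$ given by Lemma~\ref{all_pal_pref}, and using $|u| \ge 3a_1 + 6$ together with $n \ge 4$ (so $|s_{n-1}|, |s_{n-2}| \ge 2$), I would check that $p$ is the longest palindromic prefix of $\mathbf{s}$ of length $\le |u|-1$ and that $\mathbf{s}$ has no palindromic prefix of length exactly $|u|-1$.

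For the candidate I would take, when $u$ ends in $01$, the factor $w := \mathbf{s}[1:|u|] = u[1:|u|]$, which has length $|u|-1$; since $p$ begins and ends with $0$, one has $w = P \cdot 001$ where $P := p[1:|p|-1]$ is a palindrome. (When $u$ ends in $10$ this same word equals $P \cdot 010$, which \emph{is} a palindrome pair, so that case requires a different but analogous factor, handled by the same method.) Suppose now, for contradiction, that $w = AB$ with $A$ and $B$ palindromes, so $|A| + |B| = |u|-1$. The idea is to determine the lengths of all palindromic prefixes and of all palindromic suffixes of $w$, and to check that no prefix length and suffix length add up to $|u|-1$. The crucial observation is that a palindromic prefix $A$ of $w$ is a palindrome occurring at position $1$ of $\mathbf{s}$, and combining its reflection symmetry with that of the palindromic prefix $p$ of $\mathbf{s}$ forces $\mathbf{s}$ to agree with a shift of itself by $\bigl||A| - |p| + 2\bigr|$ on an initial segment of length roughly $|A|$. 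Feeding this into Lemma~\ref{all_pal_pref}, the $1$-balance of $\mathbf{s}$, and the bound $|u| \ge 3a_1 + 6$, one rules out every configuration except $|A| - |p| + 2 = 0$, which forces $A = P$ and hence $B = 001$, not a palindrome. A dual argument controls the palindromic suffixes of $w$, and the two together give the contradiction; the case where $u$ ends in $10$ is symmetric.

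The hard part, which carries essentially all of the work, is the analysis in the previous paragraph. A palindrome occurring at position $1$ of $\mathbf{s}$ (or a palindromic suffix of $u$) need not be a prefix of $\mathbf{s}$, so Lemma~\ref{all_pal_pref} cannot be applied to it directly; one must reduce it — via the reflection and period bookkeeping above and the balance of $\mathbf{s}$ — to a statement about palindromic \emph{prefixes}, where that lemma does apply. It is precisely in closing off the resulting case analysis that the exact hypothesis $|u| \ge 3a_1 + 6$ is needed: it just barely excludes the degenerate small cases, such as $u = s_5$, for which the conclusion of the lemma in fact fails.
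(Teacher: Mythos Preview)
Your strategy --- exhibit an explicit length-$|u|-1$ factor and check directly that no palindromic prefix and palindromic suffix partition it --- is the same general shape as the paper's proof, but your execution has a real gap and your candidate differs from the paper's.

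The main gap is the case where $u$ ends in $10$. You observe yourself that your candidate $w=\mathbf{s}[1:|u|]=P\cdot 010$ \emph{is} a palindrome pair in that case, and then say only that ``a different but analogous factor'' works, ``handled by the same method''. You never say which factor, and there is no evident symmetry (neither letter-swap nor reversal) that transports the $01$ argument to the $10$ case. This is exactly where the paper spends most of its effort: it works with the block alphabet $a=0^{a_1}$, $b=0^{a_1+1}$, starts from $v=0^{-1}p\,01$ (with $p$ the right-special palindromic prefix of length $|u|-2$), and when $v$ \emph{is} a palindrome pair it deduces the very rigid form $v=0^{a_1-1}1(b1)^m$; it then shifts to a second candidate $x$ (by $2|b|$ in one case, by $|a|+2$ in the other), in which $1a1$ is unioccurrent, to force the contradiction $|u|<3a_1+6$. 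None of this is captured by ``analogous''.

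Even in the $01$ case your argument is only a plan. Your ``crucial observation'' about combining the reflection of a palindromic prefix $A$ of $w$ with the reflection of $p$ is sound heuristics, but you neither derive the resulting periodicity constraint precisely nor close the case analysis; you also need the companion analysis of palindromic \emph{suffixes} of $w$ (in the Fibonacci example $w=100101001001$ has palindromic suffixes of lengths $0,1,4,7$ in addition to the palindromic prefixes $0,1,4,9$, and it is the pairing, not either list alone, that fails). And your reliance on Lemma~\ref{all_pal_pref} is misplaced at the key step: the palindromic prefixes of $w$ are palindromes at position $1$ of $\mathbf{s}$, not palindromic prefixes of $\mathbf{s}$, so the lemma does not classify them; you acknowledge this but do not carry out the reduction. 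In short, the outline is reasonable for the $01$ case but is not yet a proof, and the $10$ case is missing entirely.
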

	
	\begin{proof}
		Let $a = 0^{a_1}$ and $b = 0^{a_1 + 1}$. Note that
                these are the only two maximal blocks of zeros
                occurring in ${\bf s}$. Let $w$ be the right special
                factor of ${\bf s}$ of length $|u| - 2$. Note that
                since $w0$ is a factor of ${\bf s}$, and since $b$ is
                maximal, that $w$ cannot have suffix $b$. Thus $w$ is
                a palindromic prefix of ${\bf s}$ with suffix (and
                hence, prefix) $a$. We consider the word $v =
                0^{-1}w01$. Note that $|v| = |u| - 1$. It follows from
                the facts that $w$ is right special and that $b$ is
                the maximal block of zeros in ${\bf s}$ that $v$
                occurs in ${\bf s}$. If $v$ is not a palindrome pair we are done. Assume otherwise, and write $v = P_1 P_2$ for palindromes $P_1$ and $P_2$.
		Suppose that $P_1 \ne 0^{-1} a$. It then follows that
		
		\[u = \overbrace{\ldots 10^{-1}a}^{P_1} \overbrace{1 \ldots}^{P_2}\]
		which contradicts that $a$ is the shortest maximal block of zeros in ${\bf s}$. Hence, $P_1 = 0^{-1}a$.
		
		Thus we have $w01 = aP_2 = a1 \ldots 1b1$. Since $P_2$
                is a palindrome it follows that $P_2 = 1b1\ldots
                1b1$. Then, since $w$ is a palindrome it follows that
                $w01 = aP_2 = a1b1\ldots 1b1b1$. Continuing in this
                manner we obtain that $P_2 = 1(b1)^k$ for some integer
                $k\geq0$. We now consider two cases depending on which
                of $w0$ or $w1$ occurs as a prefix of ${\bf s}$.
		
		\textbf{Case 1.} Assume that $w0$ is a prefix of ${\bf
                  s}$. Observe that $v$ occurs as a suffix of $s_n$,
                which itself is a prefix of ${\bf s}$. Let ${\bf
                  s}[i:j]$ be this occurrence of $v$. Since the prefix
                $s_n$ is either followed by $s_{n-1}$, or $s_n$ (which
                has prefix $s_{n-1}$) and since $n\geq 4$, we then have either
		
		\[{\bf s} = \overbrace{a1b1b\ldots\underbrace{\ldots
                    b1}_{{\bf s}[i:j]} }^{s_n} \overbrace{a1b \ldots}^ {s_{n-1}} \ldots\]
		
		or
		
		\[{\bf s} = \overbrace{a1b1b\ldots\underbrace{\ldots
                    b1}_{{\bf s}[i:j]} }^{s_4} \overbrace{a10}^ {s_{3}} \overbrace{a1} ^{s_{2}} \ldots\]
		
		depending on the length of $s_{n-1}$.
		
		Now, let $x = {\bf s}[i+2|b|:j+2|b|]$. Note that $|x|
                = |v| = |u| - 1$. Suppose for contradiction that $x$
                is a palindrome pair. Note that $x$ has prefix $b$ and
                suffix $1a1b$ where $1a1$ is unioccurrent in
                $x$. Since $x$ is a palindrome pair it then follows
                that $x$ is a palindrome, and hence that $x =
                b1a1b$. Thus, $|u| = 3|b| + 2 < 3 a_1 + 6$, contrary
                to our hypothesis.
		
		\textbf{Case 2.} We now assume that $w1$ is a prefix
                of ${\bf s}$. Note that $w1 = a1(b1)^k a1$ for some $k
                > 0$, and that $w10 = u$. As before, $u$ is either
                followed by $s_{n-1}$, or $s_n$ (which has prefix
                $s_{n-1}$). Thus, \[{\bf s} = \overbrace{ \underbrace {a1b1b\ldots b1a} _ {w}10}^{u} \overbrace{a1\ldots}^{s_{n-1}}\ldots\]
		
		Set $x = {\bf s}[|b|:|u| + |a|]$. Again, note that $|x| = |u| - 1$. We suppose that $x$ is a palindrome pair and apply the same argument to $x$ as was done in the previous case to obtain $x = b1a1b$ and thus that $|u| < 3 a_1 + 6$, a contradiction. This completes the proof.
		
	\end{proof}
	
	\begin{lemma}
		\label{stur_non_pps}
		 If $n \ne |s_{m-1}^ks_{m-2}|$ for any $m \geq 2$ and
                 $1 \leq k \leq a_{m-1}$ where $n \geq 3 a_1 + 6$,
                 then ${\bf s}$ has at most $n-1$ palindrome pair factors of length $n$.
	\end{lemma}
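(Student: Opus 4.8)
The plan is to produce, under the hypotheses, a single length-$n$ factor of ${\bf s}$ that is \emph{not} a palindrome pair; a second one then comes for free. Indeed, if $x=AB$ with $A,B$ palindromes then $x^{R}=B^{R}A^{R}=BA$ is again a product of two palindromes (it is also a conjugate of $x$, so Lemma~\ref{pp_conj} applies), so $x$ is a palindrome pair iff $x^{R}$ is. Since the factor set of ${\bf s}$ is closed under reversal, a length-$n$ factor $x$ that is not a palindrome pair yields a second one, $x^{R}$, with $x\ne x^{R}$ because a non--palindrome-pair is not a palindrome. As ${\bf s}$ has exactly $n+1$ factors of length $n$, one such factor leaves at most $n-1$ palindrome pairs; so it suffices to find one.

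First I would let $L$ be the unique left special factor of ${\bf s}$ of length $n-1$, i.e.\ its length-$(n-1)$ prefix, and split on whether $L$ and $L[0:n-2]$ are palindromic prefixes. If $L[0:n-2]$ is a palindrome, then by Lemma~\ref{all_pal_pref} either it is a factor of $0^{a_1}$ (so $n-2\le a_1$, against $n\ge 3a_1+6$) or $|s_{m-1}^{k}s_{m-2}|=n$ for some $m\ge 2$ and $1\le k\le a_{m-1}$ (against the hypothesis); so $L[0:n-2]$ is not a palindrome. If $L$ is a palindrome, then by Lemma~\ref{all_pal_pref} either $n-1\le a_1$ (impossible) or $u:=L01$ or $u:=L10$ equals $s_{m-1}^{k}s_{m-2}$ with $|u|=n+1$. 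The case $m=2$ gives $n+1=a_1+1$, impossible; for $m\ge 4$ we have $|u|=n+1>3a_1+6$, so Lemma~\ref{exc_non_pp} supplies a factor of length $|u|-1=n$ that is not a palindrome pair, and we are done. There remains $m=3$: here $u=s_2^{k}s_1=(0^{a_1}1)^{k}0$, so $n=k(a_1+1)$ with $k\ge 4$ (since $k(a_1+1)\ge 3a_1+6>3(a_1+1)$) and $k\le a_2$.

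The $m=3$ case is the step I expect to need the most work, as Lemma~\ref{exc_non_pp} is stated only for index $\ge 4$; I would argue it directly, in the spirit of that lemma's proof. Here ${\bf s}$ begins with $(0^{a_1}1)^{a_2}0^{a_1+1}1$, and I would take $v=0^{a_1-1}\,1\,(0^{a_1}1)^{k-2}\,0^{a_1+1}\,1$: it has length $n$, occurs in ${\bf s}$ (being, via $k\le a_2$, a factor of that prefix), contains exactly one occurrence of the block $0^{a_1+1}$, begins with $0^{a_1-1}1$, and ends with $1$. Assuming $v=P_1P_2$ with $P_1,P_2$ palindromes, I would examine where the block $0^{a_1+1}$ lies. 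If it straddles the cut, then $P_1$ ends, hence begins, with an initial segment $0^{s}$ ($1\le s\le a_1-1$) of the leading zero-block of $v$, while $P_2$ begins, hence ends, with the remaining nonempty block $0^{a_1+1-s}$ --- forcing $v$ to end in $0$, which is false. If the block lies in $P_1$, then, occurring only once, it is centred in $P_1$, forcing $v$ to begin with $0^{a_1+1}$ or with $10^{a_1+1}1$ --- both false. If it lies in $P_2$, the same centring gives $P_2=10^{a_1+1}1$, whence $P_1=0^{a_1-1}1(0^{a_1}1)^{k-3}0^{a_1}$, which begins with a maximal zero-block of length $a_1-1$ and ends with one of length $a_1$, hence is not a palindrome. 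Thus $v$ is not a palindrome pair.

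Finally, in the remaining situation $L$ and $L[0:n-2]$ are both non-palindromes, so Lemma~\ref{left_special} with $w=L$ gives that the length-$n$ factors $0L$ and $1L$ of ${\bf s}$ are not both palindrome pairs. In every case ${\bf s}$ therefore has a length-$n$ factor that is not a palindrome pair, and by the first paragraph at most $n-1$ of its length-$n$ factors are palindrome pairs.
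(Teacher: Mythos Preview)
Your argument follows the paper's route exactly: produce one non--palindrome-pair factor of length $n$ (via Lemma~\ref{left_special} when the left special factor and its length-$(n-2)$ prefix are non-palindromes, via Lemma~\ref{exc_non_pp} otherwise), use closure under reversal to get a second distinct one, and conclude from the factor complexity $n+1$. The paper's proof says precisely this, just in one sentence.

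Where you go further than the paper is in noticing that Lemma~\ref{exc_non_pp} is stated only for index $\ge 4$, so the case $n+1=|s_2^{k}s_1|$ with $4\le k\le a_2$ (which can genuinely occur, e.g.\ $a_1=1$, $a_2$ large) is not literally covered; you then supply a direct ad~hoc argument for this $m=3$ case. That is a fair observation about the paper's terse proof, and your construction $v=0^{a_1-1}1(0^{a_1}1)^{k-2}0^{a_1+1}1$ is the right factor to examine. A couple of your sub-cases could be tightened: in the ``straddling'' case your bound should read $1\le s\le a_1$, and the palindrome condition on $P_1$ then forces $s=a_1-1$ (so for $a_1=1$ this case is simply vacuous); in the ``block inside $P_1$'' case the cleanest way to get a contradiction is to note that centring $0^{a_1+1}$ in $P_1$ forces $|P_1|=2|X|+a_1+1$ with $|X|=(k-1)(a_1+1)-1$, which already exceeds $|v|=k(a_1+1)$ once $k\ge 3$. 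With those small adjustments the $m=3$ argument is clean, and the whole proof is correct.
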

	
	\begin{proof}
		This follows from Lemmas \ref{left_special} and
                \ref{exc_non_pp}, the fact that ${\bf s}$ has factor
                complexity $n+1$ and the fact that the factor set of
                ${\bf s}$ is closed under reversal.
	\end{proof}
	
	\begin{theorem}
		\label{sturm_pals}
		Let ${\bf s}$ be a Sturmian word and let $n
                \geq 3 a_1 + 6$ be a positive integer. Every factor of
                ${\bf s}$ of length $n$ is a palindrome pair if and only if $n = |s_{m-1} ^ k s_{m-2}|$ for some $m \geq 4$ and $1 \leq k \leq a_{m - 1}$.
	\end{theorem}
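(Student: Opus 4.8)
The plan is to prove the theorem by establishing both directions, with most of the work already packaged into the lemmas.

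For the ``if'' direction, suppose $n = |s_{m-1}^k s_{m-2}|$ for some $m \geq 4$ and $1 \leq k \leq a_{m-1}$. By Lemma~\ref{pal_pairs}, the word $u = s_{m-1}^k s_{m-2}$ is itself a palindrome pair, and it is a prefix of ${\bf s}$. The key point is that every length-$n$ factor of ${\bf s}$ is a \emph{conjugate} of $u$: this is a standard fact about the standard words $s_{m-1}^k s_{m-2}$ (they are conjugate to all factors of ${\bf s}$ of the same length, essentially because ${\bf s}$ is a rotation word and these particular words are the ``Christoffel-like'' blocks that tile the circle). I would cite or briefly justify this, then apply Lemma~\ref{pp_conj} to conclude that every such conjugate — hence every length-$n$ factor — is a palindrome pair.

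For the ``only if'' direction, I would argue by contraposition: if $n \geq 3a_1 + 6$ and $n \neq |s_{m-1}^k s_{m-2}|$ for any $m \geq 2$, $1 \leq k \leq a_{m-1}$, then Lemma~\ref{stur_non_pps} immediately gives that ${\bf s}$ has at most $n-1$ palindrome pair factors of length $n$, so not every length-$n$ factor is a palindrome pair. The only gap is that Lemma~\ref{stur_non_pps} excludes values of the form $|s_{m-1}^k s_{m-2}|$ for \emph{all} $m \geq 2$, whereas the theorem's ``if'' direction only asserts the conclusion for $m \geq 4$. So I must separately check that for $m = 2$ and $m = 3$, the corresponding lengths $|s_{m-1}^k s_{m-2}|$ are either too small to fall in the range $n \geq 3a_1 + 6$, or already coincide with some length arising from $m \geq 4$. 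A direct computation: $s_2 = s_1^{a_1} s_0 = 0^{a_1}1$, so $|s_2^k s_1| = k(a_1+1) + 1 \leq a_2(a_1+1)+1$, and $|s_3| = a_2(a_1+1)+1$; these are on the order of $a_1 a_2$, which need not be small, so the argument must instead observe that any such short length is handled because the relevant factors coincide — or, more carefully, that $n \geq 3a_1+6$ together with $n = |s_1^k s_0|$ (the $m=2$ case with the convention $s_1^0 s_0$, etc.) forces contradictions in the small cases. I would reconcile this bookkeeping explicitly.

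The main obstacle I anticipate is precisely this boundary bookkeeping between the two directions: the ``if'' direction is clean via Lemmas~\ref{pal_pairs} and~\ref{pp_conj} once conjugacy of length-$n$ factors to $u$ is invoked, and the ``only if'' direction is clean via Lemma~\ref{stur_non_pps}, but the index ranges ($m \geq 4$ versus $m \geq 2$) do not match verbatim, so I need to verify that the lengths associated to $m \in \{2,3\}$ with $n \geq 3a_1+6$ are not genuine exceptions — either by showing such $n$ cannot occur, or by checking those cases by hand. A secondary point worth a sentence is confirming that the lower bound $3a_1+6$ in the theorem is consistent with the hypothesis $|u| \geq 3a_1+6$ needed to apply Lemma~\ref{exc_non_pp}, which underlies Lemma~\ref{stur_non_pps}.
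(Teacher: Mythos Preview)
Your overall strategy matches the paper's: the ``if'' direction via Lemma~\ref{pal_pairs} and Lemma~\ref{pp_conj}, the ``only if'' direction via Lemma~\ref{stur_non_pps}. There is, however, one genuine gap in your ``if'' direction.

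You assert that \emph{every} length-$n$ factor of ${\bf s}$ is a conjugate of $u = s_{m-1}^k s_{m-2}$. This cannot be right on counting grounds: a Sturmian word has exactly $n+1$ factors of length $n$, while $u$ has at most $n$ conjugates. The paper handles this by invoking the result of Melan\c{c}on \cite{Mel99}: the $n+1$ factors of length $n$ consist of the $n$ conjugates of $s_{m-1}^k s_{m-2}$ together with one additional \emph{singular} factor, and that singular factor is a palindrome (hence trivially a palindrome pair). Without this extra observation your argument leaves one factor unaccounted for.

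On the $m \geq 4$ versus $m \geq 2$ bookkeeping in the converse: you are right that Lemma~\ref{stur_non_pps} is stated for $m \geq 2$, so its contrapositive only yields $n = |s_{m-1}^k s_{m-2}|$ for some $m \geq 2$. The paper simply says the converse ``follows immediately'' and does not spell out why $m \in \{2,3\}$ is excluded by the hypothesis $n \geq 3a_1 + 6$; your instinct to check this explicitly is sound. The $m=2$ case is easy ($|s_1^k s_0| \leq a_1+1$), but for $m=3$ the lengths $k(a_1+1)+1$ can exceed $3a_1+6$ when $a_2$ is large, so this deserves a closer look rather than being waved away.
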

	
	\begin{proof}
		Let $n$ be such an integer. It is known that the $n+1$
                factors of length $n$ in ${\bf s}$ are the conjugates
                of $s_{m-1} ^ k s_{m-2}$, as well as a palindromic
                singular factor \cite{Mel99}. It follows from Lemma
                \ref{pal_pairs} that $s_{m-1} ^ k s_{m-2}$ is a
                palindrome pair. It then follows from Lemma
                \ref{pp_conj} that all factors of length $n$ are
                palindrome pairs. The converse follows immediately
                from Lemma \ref{stur_non_pps}.
	\end{proof}

\begin{corollary}
Every Sturmian word has property {\bf PP}.
\end{corollary}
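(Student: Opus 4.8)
The plan is to read the corollary off directly from Theorem \ref{sturm_pals}; essentially all the work has already been done. First I would observe that property {\bf PP} depends only on the set of factors of a word, and that any Sturmian word has the same factor set as ${\bf c}_\alpha$, where $\alpha$ is its slope. Hence it suffices to establish the property for ${\bf s} = {\bf c}_\alpha$, which is the standing assumption of this section.

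Next I would exhibit infinitely many admissible lengths. By construction $s_{n-1}$ is a proper prefix of $s_n$ for every $n \geq 2$, and since $a_{n-1} \geq 1$ for $n \geq 3$ the recurrence $s_n = s_{n-1}^{a_{n-1}} s_{n-2}$ gives $|s_n| \geq |s_{n-1}| + |s_{n-2}| > |s_{n-1}|$, so $|s_n| \to \infty$. Therefore the set
\[
  N = \{\, |s_{m-1}^k s_{m-2}| : m \geq 4,\ 1 \leq k \leq a_{m-1} \,\}
\]
is infinite; indeed it already contains the unbounded sequence of values $|s_{m-1}| + |s_{m-2}|$ for $m \geq 4$ (the case $k=1$).

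Finally I would invoke Theorem \ref{sturm_pals}: for every $n \in N$ with $n \geq 3a_1 + 6$, every length-$n$ factor of ${\bf s}$ is a palindrome pair. Since $N$ is infinite and only finitely many of its elements lie below the threshold $3a_1 + 6$, there remain infinitely many such $n$, which is precisely property {\bf PP}. I do not expect any genuine obstacle here: the only things to verify are the elementary facts that the lengths $|s_n|$ grow without bound and that the constraint $n \geq 3a_1 + 6$ excludes only finitely many candidates.
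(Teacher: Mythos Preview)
Your proposal is correct and is exactly the intended argument: the paper states the corollary immediately after Theorem~\ref{sturm_pals} with no separate proof, treating it as an immediate consequence, and your write-up simply makes explicit the obvious observations (the lengths $|s_n|$ are unbounded, the threshold $3a_1+6$ excludes only finitely many values) needed to read it off.
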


Property {\bf PP} does not characterize Sturmian words.  Consider the
following construction.  For $i = 1,2,\ldots$ let $w_i$ be an
arbitrary palindrome of some length $t_i$ over $\{0,1\}$ and write
$w_i = w_{i,1}w_{i,2}\cdots w_{i,t_i}$.  We define a sequence of palindromes
$p_1,p_2,\ldots$ as follows:
\begin{align*}
p_1 &= 0 \\
p_2 &= p_1\, w_{1,1}\, p_1\, w_{1,2} \cdots p_1\, w_{1,t_1}\, p_1 \\
p_3 &= p_2\, w_{2,1}\, p_2\, w_{2,2} \cdots p_2\, w_{2,t_2}\, p_2 \\
&\vdots
\end{align*}
Note that $p_i$ is a prefix of $p_{i+1}$ for all $i$, so we have a
limiting infinite word ${\bf p}$.

\begin{proposition}\label{shuffling_pals}
For each $i \geq 1$, every factor of ${\bf p}$ of length $|p_i|+1$ is
a palindrome pair.
\end{proposition}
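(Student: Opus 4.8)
The structure of the construction suggests the following approach. By induction on $i$. For the base case $i=1$ we have $|p_1|+1 = 2$, and every factor of ${\bf p}$ of length $2$ is trivially a palindrome pair (any word of length at most $1$ is a palindrome, so any word of length $2$ is a product of two palindromes). For the inductive step, fix $i \geq 2$ and examine an arbitrary factor $f$ of ${\bf p}$ of length $|p_i|+1$. The key structural observation is that $p_{i+1}$ — and hence ${\bf p}$ — is obtained from the alphabet $\{p_i\text{-block}, \text{letter of } w_i\}$ in a rigid way: reading ${\bf p}$, occurrences of the block $p_i$ alternate with single letters $w_{i,1}, w_{i,2}, \ldots$. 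Because $|f| = |p_i| + 1$, the window $f$ straddles exactly one of these interleaved single letters together with portions of the two adjacent copies of $p_i$. Precisely, I would argue that $f = SxP$ where $x \in \{0,1\}$ is one of the interleaving letters $w_{j,\ell}$ (or, in degenerate boundary situations, a letter of some $w_j$ with $j < i$ inherited from the prefix $p_i$ itself), $P$ is a prefix of $p_i$, $S$ is a suffix of $p_i$, and crucially $|S| + |P| = |p_i|$; moreover $S$ is the length-$|S|$ suffix of $p_i$ while $P$ is the length-$|P|$ prefix, so that $P$ and $S$ together "cover" a full copy of $p_i$ shifted by one position.

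The heart of the proof is then the following: since $p_i$ is a palindrome, its length-$k$ prefix is the reversal of its length-$k$ suffix. Write $|P| = k$, so $|S| = |p_i| - k$. Then $P$ equals the reverse of the length-$k$ suffix of $p_i$, and $S$ equals the reverse of the length-$(|p_i|-k)$ prefix of $p_i$. I want to exhibit $f = SxP$ as a product of two palindromes. The natural split is $f = (Sx) \cdot P'$ or $f = S' \cdot (xP)$ for suitable regroupings, but neither $Sx$ nor $xP$ is obviously a palindrome. Instead, the right idea is to use that $f$ is a factor straddling a copy of $p_i$: write $p_i = S\,R$ where $R$ is the length-$k$ prefix of $p_i$ (so $R = \widetilde{P}$, the reversal of $P$, since $p_i$ is a palindrome). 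The letter $x$ sits between two consecutive copies of $p_i$, so ${\bf p}$ contains $\cdots p_i\, x\, p_i \cdots = \cdots S R\, x\, S R \cdots$, and our factor $f$ is the length-$(|p_i|+1)$ window $R\,x\,S$ taken from this, i.e. $f = R x S$ with $R = \widetilde{P}$, $S = \widetilde{R'}$ where $R'$ is the prefix of $p_i$ of length $|S|$. Now $R\,x\,\widetilde{R}$ would be a palindrome, but $S = \widetilde{R'}$ need not equal $\widetilde{R}$. The fix: since $p_i$ is a palindrome, $SR = p_i = \widetilde{p_i} = \widetilde{R}\,\widetilde{S}$, so $S = \widetilde R \cdot t$ and $R = t' \cdot \widetilde S$ for a common overlap $t$; one then checks directly that $RxS = (t'\widetilde{S})\,x\,(\widetilde R t)$ decomposes as the product of the palindrome formed by the central symmetric part around $x$ and a residual palindrome coming from the border of $p_i$.

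I would actually organize this more cleanly by invoking Lemma~\ref{pp_conj}: the word $p_i\, x\, p_i$ restricted appropriately, or rather the conjugacy class. Observe that $f = RxS$ is a conjugate of $Sx R \cdot (\text{nothing})$ — more usefully, $f$ is a cyclic rotation of the word $x p_i = x SR$, since rotating $xSR$ by $|x|+|S| = |S|+1$ positions gives $R x S = f$ (using $|f| = |p_i|+1 = |xSR|$). The word $x p_i$ is a palindrome pair: indeed $x$ is a single letter, hence a palindrome, and $p_i$ is a palindrome, so $x p_i$ is visibly a product of two palindromes. By Lemma~\ref{pp_conj}, every conjugate of $x p_i$ is a palindrome pair, and in particular $f$ is. This reduces the entire inductive step to the combinatorial claim that every length-$(|p_i|+1)$ factor of ${\bf p}$ is a conjugate of $x p_i$ for some single letter $x$ occurring as an interleaving symbol — equivalently, that the only length-$(|p_i|+1)$ factors of ${\bf p}$ are of the form "(suffix of $p_i$)(single letter)(prefix of $p_i$)" with the suffix and prefix lengths summing to $|p_i|$.

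The main obstacle, and the place requiring genuine care, is this last combinatorial claim: verifying that $p_{i+1}$ (and its iterates) genuinely has the self-similar "$p_i$-blocks separated by single interleaving letters" structure with no other length-$(|p_i|+1)$ windows possible — in particular handling the boundary/overlap regions where one copy of $p_i$ abuts the next and checking that a window of length $|p_i|+1$ cannot, say, fall entirely inside a single $p_i$ block (impossible, since $|p_i|+1 > |p_i|$) or span three blocks (impossible, since $|p_i|+1 < 2|p_i|+1$ once $|p_i| \geq 1$, noting the interleaving letters have length $1$). One must also confirm that when the window is drawn from the prefix $p_i$ of ${\bf p}$ itself — before any letter of $w_i$ has appeared — the relevant interleaving letter is one of the $w_{j,\ell}$ with $j < i$, which is still a single letter, so the argument goes through verbatim. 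I expect the formal write-up to spend most of its length on this bookkeeping, with the palindrome-pair conclusion following immediately from Lemma~\ref{pp_conj} as above.
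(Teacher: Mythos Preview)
Your proposal is correct and, once you discard the false start and the unnecessary inductive scaffolding, it is exactly the paper's argument: observe that ${\bf p}$ decomposes as $p_i\,c_1\,p_i\,c_2\,p_i\,\cdots$ with each $c_j\in\{0,1\}$, so every length-$(|p_i|+1)$ factor is a conjugate of $p_i a$ for some letter $a$, and then apply Lemma~\ref{pp_conj}. The paper states this in two sentences; your detour through an explicit palindrome decomposition of $RxS$ is not needed, and the induction on $i$ adds nothing since the block structure holds for every $i$ directly from the construction.
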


\begin{proof}
Any factor of ${\bf p}$ of length $|p_i|+1$ is a conjugate of $p_ia$
for some $a \in \{0,1\}$.  The result follows from
Lemma~\ref{pp_conj}.
\end{proof}

Now the word ${\bf p}$ is not necessarily Sturmian.  For a suitable
choice of $w_i$, we have that $0p_i0$ and $1p_i1$ are both factors of
${\bf p}$, which means that ${\bf p}$ is not \emph{balanced}, and hence not
Sturmian.

\section{Palindrome pairs in the Thue--Morse word}\label{tm}
Let $\mu$ be the morphism that maps $0\to 01$ and $1\to 10$.
Let ${\bf t} = \mu^\omega(0)$ be the Thue--Morse word.

\begin{theorem}\label{tm_pp}
The word ${\bf t}$ does not have property {\bf PP}.
\end{theorem}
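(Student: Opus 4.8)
The plan is to exploit the gap between the factor complexity of ${\bf t}$, which is linear, and its palindrome complexity, which is tiny. Concretely, I would bound the number of length-$n$ factors of ${\bf t}$ that are palindrome pairs and show that for every sufficiently large $n$ this bound is smaller than the number of length-$n$ factors of ${\bf t}$; then some length-$n$ factor fails to be a palindrome pair for all large $n$, which is exactly the negation of property {\bf PP}.

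The starting point is the trivial remark that $w$ is a palindrome pair if and only if there is an index $j$ with $0 \le j \le |w|$ for which both $w[0:j]$ and $w[j:|w|]$ are palindromes; so a palindrome-pair factor $w$ of ${\bf t}$ of length $n$ is of the form $w = PQ$ where $P$ and $Q$ are palindromic factors of ${\bf t}$ with $|P|+|Q| = n$. I then need two facts about ${\bf t}$. First, ${\bf t}$ is aperiodic, hence by the Morse--Hedlund theorem it has at least $n+1$ factors of length $n$ (in fact its factor complexity is $\Theta(n)$). Second, ${\bf t}$ is extremely poor in palindromes: its palindromic factors occur at only $O(\log n)$ distinct lengths in $[1,n]$, and there is an absolute constant bounding the number of palindromic factors of any single length (both statements are known; they follow from the determination of the palindrome complexity of the Thue--Morse word by Allouche, Baake, Cassaigne, and Damanik, the underlying point being that every palindromic factor of ${\bf t}$ that is not very short has length of the shape $c\cdot 2^{k}$ with $c$ in a fixed finite set). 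In particular the total number of palindromic factors of ${\bf t}$ of length at most $n$ is $O(\log n)$.

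Putting this together, the number of distinct palindrome-pair factors of ${\bf t}$ of length $n$ is at most the number of ordered pairs $(P,Q)$ of palindromic factors of ${\bf t}$ with $|P|+|Q| = n$; grouping by $|P|$, this is at most $\sum_{\lambda} N(\lambda)\,N(n-\lambda)$, where $N(m)$ is the number of palindromic factors of ${\bf t}$ of length $m$ and $\lambda$ ranges over the $O(\log n)$ admissible lengths, which is $O(\log n)$. Since $n+1$ exceeds $O(\log n)$ for $n$ large, for all sufficiently large $n$ there is a length-$n$ factor of ${\bf t}$ that is not a palindrome pair, so ${\bf t}$ fails property {\bf PP}.

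The only genuinely nontrivial input is the sparsity of palindromic factors of ${\bf t}$. If one wants the proof self-contained rather than quoting the palindrome-complexity computation, I would derive it from the identities $\mu^{2}(0) = 0110$ and $\mu^{2}(1) = 1001$: both images are palindromes, so $\mu^{2}$ maps palindromes to palindromes; conversely, a desubstitution argument should show that every palindromic factor of ${\bf t}$ longer than a small threshold, after deleting a bounded-length prefix and a bounded-length suffix, is the $\mu^{2}$-image of a strictly shorter palindromic factor. Iterating this reduction forces the admissible lengths into the claimed sparse set and simultaneously bounds the multiplicities. I expect the delicate step to be exactly this desubstitution — pinning down how a long palindromic factor of ${\bf t}$ must be positioned relative to the $\mu^{2}$-block boundaries so that trimming the right short pieces leaves a genuine $\mu^{2}$-image; once that is in place, the rest is bookkeeping and counting.
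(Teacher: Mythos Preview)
Your counting strategy is a genuinely different route from the paper's proof, which is entirely computational: the paper invokes the Walnut Prover to produce an automaton recognizing all $n$ for which every length-$n$ factor of ${\bf t}$ is a palindrome pair, and reads off that this set is $\{0,1,\dots,5\}$. So a human-readable counting argument would be a welcome alternative.

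Unfortunately the key input of your argument is false. You assert that palindromic factors of ${\bf t}$ occur at only $O(\log n)$ distinct lengths in $[1,n]$, with lengths of the shape $c\cdot 2^{k}$. But the palindrome complexity of the Thue--Morse word (the formula of Blondin-Mass\'e, Brlek, Garon, Labb\'e, which the paper itself quotes) shows $P_{\bf t}(n)\in\{2,4\}$ for \emph{every} even $n\ge 2$; in particular palindromic factors exist at roughly $n/2$ lengths below $n$, not $O(\log n)$. Your desubstitution heuristic via $\mu^{2}$ does correctly explain why palindromes come in $\mu^{2}$-orbits, but it does not force their lengths into a sparse set: applying $\mu^{2}$ multiplies length by $4$, and central extensions fill in the intermediate even lengths. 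With the correct bound $P_{\bf t}\le 4$, the crude count $\sum_{j} P_{\bf t}(j)P_{\bf t}(n-j)$ is $\Theta(n)$ for even $n$, which does not undercut the factor complexity $C_{\bf t}(n)=\Theta(n)$, so the pigeonhole step fails.

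A partial rescue: for \emph{odd} $n$ your counting does work, since $P_{\bf t}(m)=0$ for odd $m\ge 5$ forces one of the two palindromes to have length $\le 4$, giving a constant bound on palindrome-pair factors (the paper proves exactly this, with the bound $32$, in a separate proposition). But property~\textbf{PP} only requires infinitely many good $n$, so ruling out odd $n$ is not enough; you still need an argument for all large even $n$, and for those the naive count is too weak. You would need to exploit the constraint that $PQ$ must itself occur in ${\bf t}$, not merely that $P$ and $Q$ do separately --- and making that effective is essentially a new problem.
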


\begin{proof}
This can be verified ``automatically'' using the Walnut Prover
software created by Hamoon Mousavi and available at
\begin{center}
\url{https://cs.uwaterloo.ca/~shallit/papers.html}~.
\end{center}
We will not explain in detail the methodology implemented by the
Walnut Prover (the reader may consult \cite{GHS13} or \cite{SS15}, for example).  The
important point is that to be able to apply this method, the property
{\bf PP} needs to be expressible in a certain extension of first order
logic.  The relevant formulae are given below.  The first defines all
pairs $(i,j)$ such that ${\bf t}[i : j]$ is a palindrome:
\[
(i,j) : (i=j) \vee ((i<j) \wedge \forall k\; (i+k < j) \Rightarrow
{\bf t}[i+k] = {\bf t}[j-1-k]).
\]
Let us refer to this formula as $\text{palindrome}(i,j)$.
The clause $(i=j)$ corresponds to the empty palindrome when used in
the next formula, which defines all $n$ such that every factor of
${\bf t}$ is a palindrome pair:
\[
(n) : \forall i \; \exists j \; \text{palindrome}(i,j) \wedge
\text{palindrome}(j,i+n).
\]
The output of the Walnut Prover is a description (by a finite
automaton) of the binary representations of all $n$ defined by the
formula above.  After running the prover we find that for $0 \leq n
\leq 5$, every length-$n$ factor of ${\bf t}$ is a palindrome pair,
but for $n \geq 6$ there is at least one length-$n$ factor of ${\bf
  t}$ that is not a palindrome pair.
\end{proof}

\begin{proposition}
For each $k \geq 1$ there are at least $3\cdot 2^{2k+1}$ factors of ${\bf
  t}$ of length $3\cdot 2^{2k}$ that are palindrome pairs.
\end{proposition}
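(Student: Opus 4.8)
The plan is to produce $3\cdot 2^{2k+1}$ distinct palindrome pair factors of length $3\cdot 2^{2k}$ by applying $\mu^{2k}$ to the six length-$3$ factors of ${\bf t}$ and taking all of their conjugates. The first ingredient is that $\mu^{2k}$ preserves palindromes and hence palindrome pairs. Indeed $\mu^2(0)=0110$ and $\mu^2(1)=1001$ are palindromes, and from $\mu^{2k+2}(a)=\mu^{2k}(a)\,\mu^{2k}(\bar a)\,\mu^{2k}(\bar a)\,\mu^{2k}(a)$ (with $\bar a$ the complementary letter) one gets by induction that $\mu^{2k}(0)$ and $\mu^{2k}(1)$ are palindromes for every $k\ge 1$; writing a palindrome as $u_1u_2\cdots u_m$ with $u_i=u_{m+1-i}$, the reversal of $\mu^{2k}(u_1)\cdots\mu^{2k}(u_m)$ is $\mu^{2k}(u_m)\cdots\mu^{2k}(u_1)=\mu^{2k}(u_1)\cdots\mu^{2k}(u_m)$, so $\mu^{2k}$ maps palindromes to palindromes and therefore palindrome pairs to palindrome pairs. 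The six length-$3$ factors of ${\bf t}$, namely $001,010,011,100,101,110$, are each trivially palindrome pairs, so the six words $\mu^{2k}(w)$ are palindrome pair factors of ${\bf t}$ of length $3\cdot 2^{2k}$; by Lemma~\ref{pp_conj} so is every conjugate of each of them.

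Write $B_0=\mu^{2k}(0)$ and $B_1=\mu^{2k}(1)$. The six images lie in just two conjugacy classes: $C_A$, the class of $B_0B_0B_1$ (it contains $\mu^{2k}(001)=B_0B_0B_1$, $\mu^{2k}(010)=B_0B_1B_0$ and $\mu^{2k}(100)=B_1B_0B_0$), and $C_B$, the class of $B_0B_1B_1$ (containing $\mu^{2k}(011)$, $\mu^{2k}(110)$ and $\mu^{2k}(101)$). I would then check two things. First, every element of $C_A\cup C_B$ is a factor of ${\bf t}$: a cyclic rotation of $B_0B_0B_1$ is a factor of one of $B_0B_0B_1B_0=\mu^{2k}(0010)$, $B_0B_1B_0B_0=\mu^{2k}(0100)$, $B_1B_0B_0B_1=\mu^{2k}(1001)$, and since $0010,0100,1001$ are factors of ${\bf t}$ and $\mu^{2k}({\bf t})={\bf t}$, the rotation is a factor of ${\bf t}$; the same argument with $0110,1101,1011$ handles $C_B$. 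Second, each class has exactly $3\cdot 2^{2k}$ elements, that is, $\mu^{2k}(w)$ is primitive: a power $v^n$ with $n\ge 3$ would have period $|v|\le 2^{2k}$, and since $3\cdot 2^{2k}\ge 2|v|+1$ this would force an overlap in ${\bf t}$, which is impossible; the remaining case $\mu^{2k}(w)=v^2$ with $|v|=3\cdot 2^{2k-1}$ is excluded by a short computation using $B_0=\mu^{2k-1}(0)\mu^{2k-1}(1)$, $B_1=\mu^{2k-1}(1)\mu^{2k-1}(0)$ and the injectivity of $\mu^{2k-1}$ as a morphism.

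The one remaining point — and, I expect, the hardest — is that $C_A\neq C_B$, equivalently that $\mu^{2k}(001)$ and $\mu^{2k}(011)$ are not conjugate. I would establish this with a conjugacy invariant; for instance, the multiset of cyclic distances between consecutive occurrences of the factor $11$ is preserved under conjugation but differs on the two words (for $k=1$ it is $\{4,8\}$ for $\mu^2(001)=011001101001$ and $\{6,6\}$ for $\mu^2(011)=011010011001$), and a careful bookkeeping of the occurrences of $11$ inside $B_0$, inside $B_1$, and across block boundaries should separate the two classes for every $k$. Alternatively, $C_A\neq C_B$ follows from the recognizability of the Thue--Morse morphism, which guarantees that a sufficiently long factor of ${\bf t}$ determines the length-$3$ factor it ``lifts''. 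Granting $C_A\neq C_B$, the set $C_A\cup C_B$ consists of $2\cdot 3\cdot 2^{2k}=3\cdot 2^{2k+1}$ distinct palindrome pair factors of ${\bf t}$, all of length $3\cdot 2^{2k}$, which is exactly the assertion. (One can in fact avoid verifying $C_A\neq C_B$ by hand: the factors of length $3\cdot 2^{2k}$ outside $C_A\cup C_B$ are precisely those all of whose occurrences straddle $\mu^{2k}$-block boundaries with distinct end blocks, of which there are at most $4(2^{2k}-1)$, so the known complexity value $10\cdot 2^{2k}-4$ for ${\bf t}$ at length $3\cdot 2^{2k}$ already forces $|C_A\cup C_B|\ge 6\cdot 2^{2k}$, hence equality.)
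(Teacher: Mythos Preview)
Your argument is correct and uses the same core idea as the paper: exhibit the $3\cdot 2^{2k+1}$ factors as the two conjugacy classes arising from $\mu^{2k}$ applied to length-$3$ words. The paper's execution is more economical in two places. It picks the \emph{palindromic} representatives $010$ and $101$ rather than $001$ and $011$; since $\mu^{2k}(010)$ and $\mu^{2k}(101)$ are then themselves palindromes, Lemma~\ref{pp_conj} makes every conjugate a palindrome pair immediately, bypassing your induction that $\mu^{2k}$ preserves palindromes. And it invokes the well-known fact that the squares $\mu^{2k}(010)^2$ and $\mu^{2k}(101)^2$ occur in ${\bf t}$, so every conjugate is automatically a factor of ${\bf t}$, replacing your case analysis via length-$4$ factors. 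In the other direction, you are more careful about the count: the paper does not explicitly verify primitivity or that the two classes are disjoint, whereas you address both. Your complexity-based alternative is in fact complete as stated --- there are exactly four length-$4$ factors $abcd$ of ${\bf t}$ with $a\neq d$ (namely $0011$, $0101$, $1010$, $1100$), and for the other six the non-aligned length-$3\cdot 2^{2k}$ factors of $\mu^{2k}(abcd)$ are already conjugates of $\mu^{2k}(abc)$; hence $|C_A\cup C_B|\ge 10\cdot 2^{2k}-4-4(2^{2k}-1)=6\cdot 2^{2k}$, which forces both primitivity and $C_A\cap C_B=\emptyset$ at once, so neither the overlap-freeness nor the recognizability argument is needed.
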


\begin{proof}
It is well known that for each $k$ the squares $\mu^k(010)\mu^k(010)$
and $\mu^k(101)\mu^k(101)$ occur in the Thue--Morse word.
Furthermore, since $\mu^2(0)=0110$
and $\mu^2(1) = 1001$, we see that $\mu^{2k}(010)$ and $\mu^{2k}(101)$
are palindromes.  Thus every conjugate of $\mu^{2k}(010)$ and
$\mu^{2k}(101)$ is a product of two palindromes and occurs in ${\bf t}$.
\end{proof}

\begin{proposition}
For odd $n$ there are at most 32 factors of ${\bf t}$ of
length $n$ that are palindrome pairs.
\end{proposition}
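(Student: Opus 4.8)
The plan is to exploit the fact that the Thue--Morse word contains relatively few palindromes, and in particular that all nonempty palindromic factors of $\mathbf{t}$ have bounded length — a well-known fact. Recall that every palindromic factor of the Thue--Morse word has length at most some small constant (in fact the longest palindromes in $\mathbf{t}$ have length $4$; the palindromic factors are exactly $\varepsilon$, $0$, $1$, $00$, $11$, $010$, $101$, $0110$, $1001$, and one checks there are none longer). So if a factor $x$ of $\mathbf{t}$ of odd length $n$ is a palindrome pair, write $x = PQ$ with $P, Q$ palindromes; since $n$ is odd, exactly one of $|P|, |Q|$ is odd and the other even, and both are bounded by the maximal palindrome length $L_{\max} = 4$. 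Hence $x$ is determined by the pair $(P, Q)$, with $|P| + |Q| = n$, $|P| \le 4$, $|Q| \le 4$. But this forces $n \le 8$, which is too crude: for large odd $n$ there would be \emph{no} palindrome pairs at all, whereas the statement asserts at most $32$. So the first thing I would do is double-check whether the intended bound on palindrome length in $\mathbf{t}$ is being used correctly, or whether the relevant objects are \emph{bispecial} / extendable palindromic prefixes rather than all palindromic factors.

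**Reconsidering,** the correct framework is surely this: a palindrome pair factor $x = PQ$ of odd length need not have \emph{both} $P$ and $Q$ short, but one can always take $P$ (say) to be the \emph{longest palindromic prefix} of $x$ and $Q$ the remaining suffix — and if $Q$ is not itself forced to be a palindrome then we iterate, but for palindromic \emph{length two} we have exactly one split. The key structural input is instead a result on where long palindromic prefixes of factors of $\mathbf{t}$ can sit. Concretely, I would use the fact (provable by the same automatic/Walnut method, or by hand from the $\mu$-structure) that for each length $m$ there are only $O(1)$ factors of $\mathbf{t}$ of length $m$ that are palindromes, \emph{and} only $O(1)$ pairs $(i,j)$ with $j - i$ large such that $\mathbf{t}[i:j]$ is a palindrome and $j - i \equiv 0 \pmod 2$ versus $\equiv 1 \pmod 2$ in a controlled way. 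More usefully: the set of lengths of palindromic factors of $\mathbf{t}$ is finite on the ``even side'' in a way that, combined with $n$ odd, pins down the split point of any palindrome-pair decomposition to lie in a bounded window near each end of the factor.

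**The cleanest route** is to argue as follows. Let $x$ be a factor of $\mathbf{t}$ of odd length $n$, occurring as $\mathbf{t}[i : i+n]$, and suppose $x = PQ$ with $|P| = p$, $|Q| = q$, $p + q = n$, $p, q \ge 0$, and $\mathbf{t}[i:i+p]$, $\mathbf{t}[i+p:i+n]$ both palindromes. Since palindromic factors of $\mathbf{t}$ have length in a fixed finite set $S$ (with $\max S = 4$), we get $p \in S$ and $q \in S$, hence $p, q \le 4$ and $n = p + q \le 8$. Therefore for odd $n \ge 9$ there are \emph{no} palindrome pair factors, and for odd $n \le 8$ (i.e. $n \in \{1,3,5,7\}$) the number of factors of length $n$ is at most $n + 1 \le 8 < 32$ by the factor complexity of $\mathbf{t}$ (which is at most $n+1$ only for $n \le 3$; in general it is $O(n)$, and for $n \le 7$ it is bounded by a small constant $\le 12$). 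In all cases the count is well under $32$, giving the result with room to spare. \textbf{The main obstacle} I anticipate is justifying rigorously that every nonempty palindromic factor of $\mathbf{t}$ has length at most $4$: this is classical but needs a citation or a short argument via the $\mu$-preimage structure (a long palindrome in $\mathbf{t}$ would, under desubstitution by $\mu$, either contain a long palindrome in a shifted copy or force a forbidden factor like $000$, $111$, $0101010$, or $\mu$-incompatible patterns). Once that lemma is in hand — or quoted, e.g.\ from the same line of work as Theorem~\ref{tm_pp}, using Walnut to certify ``$\mathrm{palindrome}(i,j) \Rightarrow j - i \le 4$'' — the rest is the elementary counting above, and the constant $32$ is a safe (non-tight) bound that absorbs any slack in the factor-complexity estimate for small $n$.
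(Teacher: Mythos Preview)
Your argument rests on the claim that every palindromic factor of $\mathbf{t}$ has length at most $4$. This is false: the Thue--Morse word contains palindromic factors of arbitrarily large \emph{even} length. Indeed, the preceding proposition in the paper uses that $\mu^{2k}(010)$ and $\mu^{2k}(101)$ are palindromic factors of $\mathbf{t}$, of length $3\cdot 4^k$; and the palindromic complexity formula \eqref{tm_pal_cmplxty} of Blondin-Mass\'e et al.\ shows $P_{\mathbf t}(n)>0$ for every even $n$. So your conclusion that $p,q\le 4$ and hence $n\le 8$ collapses: a palindrome-pair factor $x=PQ$ of odd length $n$ may perfectly well have $|P|=1$ and $|Q|=n-1$ with $Q$ a genuinely long even-length palindrome in $\mathbf t$. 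You sensed this in your ``Reconsidering'' paragraph, but then reverted to the same false premise in the ``cleanest route''.

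The correct input, and the one the paper uses, is that the \emph{odd}-length palindromic factors of $\mathbf t$ are bounded: $P_{\mathbf t}(n)=0$ for every odd $n\ge 5$, so the only odd-length palindromic factors are $0,1,010,101$. If $x$ has odd length $n\ge 5$ and $x=PQ$ with $P,Q$ palindromes, then exactly one of $|P|,|Q|$ is odd and must therefore be one of those four words; the other piece, of the complementary even length, is one of at most four palindromes by \eqref{tm_pal_cmplxty}. With two choices for which side carries the odd piece, this gives at most $4\cdot 4\cdot 2=32$. Thus the shape of your counting was right, but the crucial asymmetry --- odd-length palindromes have bounded \emph{length}, even-length palindromes are merely bounded in \emph{number} per length --- is exactly the point you missed.
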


\begin{proof}
Let $w$ be an factor of ${\bf t}$ of odd length.  If $|w| \in \{1,3\}$
the result is clear, so suppose $|w| \geq 5$.  Blondin-Mass\'e et
al.~\cite{BBGL08} gave the following formula for the palindromic
complexity $P_{\bf t}(n)$ of ${\bf t}$:
\begin{equation}\label{tm_pal_cmplxty}
P_{\bf t}(n) =
\begin{cases}
1 & n = 0,\\
2 & 1 \leq n \leq 4,\\
0 & n \text{ odd and } n \geq 5,\\
4 & n \text{ even and } 4^k+2 \leq n \leq 3\cdot 4^k, \text{ for } k
\geq 1,\\
2 & n \text{ even and } 3\cdot 4^k+2 \leq n \leq 3\cdot 4^{k+1}, \text{ for } k
\geq 1.
\end{cases}
\end{equation}
Consequently $w$ is not a palindrome.  Suppose $w$ is a product of two
palindromes, $u$ and $v$, where $|u|$ is odd.
We have two choices for $|u|$, namely $|u| \in
\{1,3\}$, and for each choice of length there are two possible values
for $u$.  Thus there are four choices for $u$.  By
\eqref{tm_pal_cmplxty} there are at most four choices for $v$.  Since
we could have $w=uv$ or $w=vu$, this gives at most $32$ possibilities
for $w$.
\end{proof}

\section{Words with the maximal number of palindrome pair factors}\label{rich_pp}
A word $w$ of length $n$ is called \emph{rich} (i.e., ``rich in
palindromes'') if it contains the maximum number (viz.\ $n+1$) of
distinct palindromic factors \cite{GJWZ09}.  Rich words have been
extensively studied.  Here we investigate the analogue of richness for
palindrome pairs; i.e., we study words of length $n$ that have the
maximum possible number of distinct factors that are palindrome pairs.

As it turns out, we are unable to prove anything concerning this
problem; we therefore will just present some conjectures based on
computer calculations.  Computer experiments suggest that for $n\geq
1$ the maximum possible number of distinct palindrome pair factors in
a binary word of length $n$ is
\[
\left\lceil \frac{n^2+2n+3}{3} \right\rceil.
\]
Furthermore, computer experiments also suggest that if $x$ is a binary
word of length $n$ beginning with $0$ that contains the maximum number
of distinct palindrome pair factors, then
\begin{itemize}
\item if $n \equiv 0 \pmod 3$, then $x$ is one of
\[
\begin{array}{lll} 0^{n/3+1}1^{n/3}1^{n/3-1},& 0^{n/3}1^{n/3}1^{n/3},&
0^{n/3}1^{n/3+1}1^{n/3-1}, \\ 0^{n/3-1}1^{n/3}1^{n/3+1},&
0^{n/3-1}1^{n/3+1}1^{n/3};&
\end{array}
\]
\item if $n \equiv 1 \pmod 3$, then $x$ is one of
\[
\begin{array}{lll} 0^{\lceil n/3 \rceil}1^{\lfloor n/3 \rfloor}1^{\lfloor n/3
  \rfloor},& 0^{\lceil n/3 \rceil}1^{\lceil n/3 \rceil}1^{\lfloor n/3
  \rfloor-1},& 0^{\lfloor n/3 \rfloor}1^{\lfloor n/3 \rfloor}1^{\lceil
  n/3 \rceil},\\ 0^{\lfloor n/3 \rfloor}1^{\lceil n/3 \rceil}0^{\lfloor
  n/3 \rfloor},& 0^{\lfloor n/3 \rfloor-1}1^{\lceil n/3 \rceil}1^{\lfloor n/3
  \rfloor};&
\end{array}
\]
\item if $n \equiv 2 \pmod 3$, then $x$ is one of
\[
0^{\lceil n/3 \rceil}1^{\lceil n/3 \rceil}1^{\lfloor n/3
  \rfloor}, \quad 0^{\lfloor n/3 \rfloor}1^{\lceil n/3 \rceil}0^{\lceil n/3
  \rceil}.
\]
\end{itemize}
Given how simple these words are, it would be nice to prove that they
do indeed contain the maximum number of distinct palindrome pair
factors. 

\section{Minimal non palindrome pairs}
In this section we look at words somewhat related to those described
in the previous section.  A \emph{minimal non palindrome pair} is a
word that is not a palindrome pair but has the property that each of
its proper factors is a palindrome pair.  It turns out that we can
characterize the minimal non palindrome pairs over the alphabet
$\{0,1\}$.

Given a word $w$, a \emph{block} of $w$ is an occurrence of a factor
of $w$ that consists of one or more repetitions of a single letter and
that cannot be extended to either side to create a longer such occurrence.
The following lemma is easily verified and may be used without reference in the following results.

\begin{lemma}
	\label{3_block_all_pp}
	Any word with at most three blocks is a palindrome pair.
\end{lemma}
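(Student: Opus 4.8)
The plan is to use the fact that, over the binary alphabet $\{0,1\}$, a word with at most three blocks has one of only a handful of shapes, and to handle each shape directly. First I would enumerate the cases. A word with zero blocks is the empty word, which is a palindrome; a word with one block is $a^i$ with $i\ge 1$, again a palindrome; a word with two blocks is $a^ib^j$ with $a\ne b$ and $i,j\ge 1$, which is the product of the two palindromes $a^i$ and $b^j$; and a word with three blocks is $a^ib^ja^k$ with $a\ne b$ and $i,j,k\ge 1$, since over $\{0,1\}$ the letter of the third block differs from that of the middle block and hence coincides with that of the first.

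For the three-block case I would give two short arguments, either of which suffices. The first is to observe that $a^ib^ja^k$ is a conjugate of $a^{i+k}b^j$: taking $u=a^ib^j$ and $v=a^k$, we have $vu=a^{i+k}b^j$, which is a palindrome pair, so Lemma~\ref{pp_conj} shows $a^ib^ja^k$ is as well. The second, self-contained argument splits on whether $i\le k$: if so, $a^ib^ja^k=(a^ib^ja^i)(a^{k-i})$ is a product of two palindromes, and if $i>k$ then symmetrically $a^ib^ja^k=(a^{i-k})(a^kb^ja^k)$ works (one can also reduce the second subcase to the first by noting that the reverse of a product of two palindromes $p_1p_2$ is $p_2p_1$, again a product of two palindromes).

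There is no real obstacle here; the statement is genuinely routine, as the phrase ``easily verified'' in the paper indicates. The one point worth making explicit is that the binary hypothesis is essential: over a larger alphabet the word $abc$ with three distinct letters has three blocks but, on checking all decompositions of length at most $3$, is not a product of two palindromes. Accordingly the proof must make use of the fact that the first and third blocks of a three-block binary word carry the same letter, which is precisely what makes both arguments above succeed.
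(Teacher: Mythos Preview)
Your proposal is correct; the paper offers no proof beyond the phrase ``easily verified,'' and your explicit case analysis is precisely the routine verification the reader is meant to supply. Either of your three-block arguments is fine (the self-contained split on $i\le k$ versus $i>k$ is perhaps preferable since it avoids invoking Lemma~\ref{pp_conj}), and your remark about the binary hypothesis is a worthwhile observation.
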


A block that is neither the first nor the last block in a word $w$ is
called an \emph{internal block}.  A block $b$ of $w$ is a
\emph{maximum block} if all other blocks have length at most the
length of $b$.  Among all maximum blocks, the internal ones are called
\emph{internal maximum blocks}.
We now state and prove a critical lemma.

\begin{lemma}
	\label{max_5_blocks}
	A minimal non palindrome pair with an internal maximum block has at most five blocks.
\end{lemma}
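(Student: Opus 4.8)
The plan is to argue by contradiction: suppose $w$ is a minimal non palindrome pair with an internal maximum block, yet $w$ has at least six blocks. Write $w = B_1 B_2 \cdots B_m$ with $m \geq 6$, each $B_i$ a maximal run of a single letter, and let $B_j$ (with $2 \leq j \leq m-1$) be an internal maximum block, so $|B_i| \leq |B_j|$ for all $i$. The key structural fact I would exploit is that since $w$ is a \emph{minimal} non palindrome pair, the proper prefix $w' = B_1 \cdots B_{m-1}$ and the proper suffix $w'' = B_2 \cdots B_m$ are both palindrome pairs, as is every factor properly contained in $w$. In particular, removing one letter from the first block or one letter from the last block must produce a palindrome pair.

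First I would record the easy consequences of Lemma~\ref{3_block_all_pp}: since $w$ is not a palindrome pair, $m \geq 4$, and moreover any factor of $w$ obtained by deleting a prefix or suffix that leaves at most three blocks is automatically a palindrome pair, so the obstruction to $w$ being a palindrome pair must genuinely involve the interplay of at least four blocks. The heart of the argument is to understand how a word with an internal maximum block can fail to be a palindrome pair. If $w = P_1 P_2$ with $P_1, P_2$ palindromes, then the "cut point" between $P_1$ and $P_2$ lies in some block $B_k$, and each $P_i$ is a palindrome whose block structure is the reverse of itself; the internal maximum block $B_j$ then imposes that its mirror block under the relevant palindrome also has length $|B_j|$ and that every block strictly between them is strictly shorter. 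I would use this to show that if $w$ had six or more blocks, one could actually write $w$ as a palindrome pair by placing the cut inside or adjacent to $B_j$ and pairing blocks symmetrically on either side — contradicting that $w$ is not a palindrome pair.

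Concretely, the main case analysis would split on the position of the internal maximum block $B_j$ among $B_1, \ldots, B_m$ (roughly: $B_j$ near the middle versus $B_j$ close to an end) and on whether deleting the outermost letters forces $w', w''$ to have palindrome-pair decompositions whose cut points are compatible. Using that $w'$ and $w''$ are palindrome pairs, I would extract palindromic decompositions of large subfactors of $w$ and then "glue" them across $B_j$: because $B_j$ is a maximum block, any palindrome in a decomposition of $w'$ or $w''$ that spans part of $B_j$ must be centered in a way that is essentially forced, and this rigidity lets me reconstruct a two-palindrome decomposition of all of $w$. The point is that six blocks give enough room for the forced decomposition of the proper factors to extend, whereas with five or fewer blocks the extension can be obstructed.

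I expect the main obstacle to be the bookkeeping in this gluing step: translating "$w'$ is a palindrome pair" into a usable statement about where its palindromic cut sits relative to $B_j$, and ruling out the degenerate possibilities where one of the two palindromes in a decomposition of $w'$ is empty or very short (which is exactly the situation Lemma~\ref{3_block_all_pp} is designed to handle). A secondary subtlety is that the two palindromes $P_1, P_2$ need not split $w$ at a block boundary — the cut can fall strictly inside a block — so I must carefully track, for each candidate cut position, how many effective blocks each half contains and where the mirror image of $B_j$ lands. Once those cases are organized around the location of $B_j$ and the parity/length constraints it forces on the neighbouring blocks, each individual case should reduce to a short verification, but enumerating the cases cleanly so that six blocks always yields a contradiction is where the real work lies.
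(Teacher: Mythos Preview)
Your plan has a genuine gap at its central step. You propose to take palindrome-pair decompositions of the proper prefix $w' = B_1\cdots B_{m-1}$ and the proper suffix $w'' = B_2\cdots B_m$ and then ``glue'' them across the internal maximum block $B_j$ to produce a palindrome-pair decomposition of $w$ itself. But there is no mechanism for this: if $w' = P_1P_2$ and $w'' = Q_1Q_2$, the cut points of these two decompositions can sit in entirely different places, and neither $P_1$ nor $Q_2$ need bear any simple relation to a palindromic factor of $w$. Your claim that ``any palindrome in a decomposition of $w'$ or $w''$ that spans part of $B_j$ must be centered in a way that is essentially forced'' is not correct as stated: a palindrome $P$ may contain only a proper prefix or suffix of $B_j$, in which case $B_j$ imposes no centering constraint on $P$ at all; and even when $P$ contains all of $B_j$, the conclusion is only that $B_j$ is the unique longest block of $P$ and hence central in $P$, which still does not locate the cut between $P_1$ and $P_2$ relative to the block structure of $w$. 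Nothing in your outline explains why \emph{six} blocks is the threshold --- you assert that with six blocks the extension works and with five it can be obstructed, but that is exactly the content of the lemma, not an argument for it.

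The paper's proof proceeds quite differently. Rather than working with the two long proper factors $w'$ and $w''$, it examines specific \emph{short} proper factors around the internal maximum block $x$: four- and five-block windows such as $0\cdots 0\,x\,0\cdots 0\,1$ and the same word with one leading $0$ removed. Minimality forces each such window to be a palindrome pair, and from these local constraints one deduces in turn that the $0$-blocks adjacent to $x$ are equal, that their common length is $1$, that every $0$-block in $w$ has length $1$, and finally that every internal $1$-block of $w$ already has the maximum length $|x|$. Once $w$ has this rigid shape, a direct inspection of the first and last blocks shows that $w$ is itself a palindrome pair, contradicting the hypothesis. The assumption of at least six blocks is used concretely: it is what guarantees that each of the short windows considered is a \emph{proper} factor of $w$, so that minimality applies to it.
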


\begin{proof}
	Let $w$ be a minimal non palindrome pair with an internal maximum block $x$ (of ones, say). Note that $|x| \geq 2$. Suppose that $w$ has at least six blocks. Since $x$ is internal and $w$ has at least six blocks we may without loss of generality assume that
	
	\[z_0 = 0 \ldots 0 \underbrace{1 \ldots 1} _ {x} 0 \ldots 0 1 \ldots 1\] 
	is a four block factor of $w$ containing $x$ internally. Since 
	
	\[y = 0\ldots 0 \underbrace{1\ldots 1} _ {x} 0\ldots 0 1\]
	is a proper factor of $w$, and hence a palindrome pair, and since $|x|\geq 2$, it follows that the blocks of $0$'s in $z_0$ have equal length. Then, since $0^{-1} y$ is also a palindrome pair it follows that these blocks of $0$'s have length $1$.
	
	We now consider two cases.
	
	\textbf{Case 1.} Assume that $z_0$ is not a suffix of $w$. Then, the factor 
	
	\[z_1 = 0 \underbrace{1\ldots 1} _ {x} 0 1 \ldots 1 0\]	
	occurs in $w$. Since $w$ has at least six blocks it follows that $z_1$ is a proper factor of $w$, and hence a palindrome pair. It then follows that $z_1$ is a palindrome. Before we proceed, we show that the other case results in the same occurrence.
	
	\textbf{Case 2.} Assume now that $z_0$ is a suffix of $w$. Since $w$ has at least six blocks it then follows that 
	
	\[z_2 =0 1\ldots 1 0 \underbrace{1\ldots 1} _ {x} 0\]	
	is a proper factor of $w$, and thus a palindrome pair, and
        hence a palindrome.
	
In either case, we find that $w$ contains a palindromic factor of the
form $z_1$.

	We now claim that all blocks of zeros in $w$ have length
        $1$. Suppose to the contrary that $00$ occurs in $w$. Without
        loss of generality we may assume that there is an occurrence
        of $00$ that begins prior to the start of $z_1$.  Let $b_0$ be
        the rightmost such occurrence. Let $u_0$ be the proper factor
        of $w$ that begins with this occurrences of $b_0$ and ends
        with the last block of ones in $z_1$. Since $u_0$ is a proper factor, and hence a palindrome pair, it follows that ${b_0}^{-1} u_0$ is a palindrome. However, we then have that the proper factor $u_0 1^{-1}$ of $w$ is not a palindrome pair, a contradiction. Hence $00$ does not occur in $w$.
	
        Next, we show that all internal blocks of ones in $w$ are
        maximum blocks. Suppose that there is a block of ones in $w$
        which is not a maximum block. Let $b_1$ be an occurrence of
        this with minimum possible distance to $z_1$ in $w$. Without
        loss of generality we may assume that $b_1$ precedes $z_1$ in
        $w$. Then, since $b_1$ is internal and has minimum possible
        distance to $z_1$ in $w$, it follows that
        \[0 b_1 0 \underbrace{1\ldots 1} _ {|x|} 0\]
        is a proper factor of $w$. This factor is not a palindrome
        pair, a contradiction. Hence, all internal blocks of ones in
        $w$ are maximum blocks.
	
	It follows from these two properties (all blocks of zeros in
        $w$ have length 1 and all internal blocks of ones in $w$ have
        the same length) that $w$ is a palindrome pair. This is easily
        verified by considering the initial and final blocks of
        $w$. Hence, we have a contradiction. This completes the proof.	
\end{proof}

We now make use of this lemma to prove two results which will allow us
to construct the class of minimal non palindrome pairs.  By
``extending a block'' we mean increasing its length by 1 or more.

\begin{lemma}
	\label{mnpp_add}
	The word resulting from extending an internal maximum block in a minimal non palindrome pair is itself a minimal non palindrome pair.
\end{lemma}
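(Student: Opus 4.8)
The plan is to let $w$ be a minimal non palindrome pair with an internal maximum block $x$, let $w'$ be the word obtained by extending $x$ to a longer block $x'$, and show $w'$ is again a minimal non palindrome pair. This splits into two tasks: (1) $w'$ is not a palindrome pair, and (2) every proper factor of $w'$ is a palindrome pair. By Lemma~\ref{max_5_blocks}, $w$ (and hence $w'$) has at most five blocks, so in fact we may assume $w$ has exactly five blocks (a word with four or fewer blocks is a palindrome pair by Lemma~\ref{3_block_all_pp}, hence not a minimal non palindrome pair), and $x$ is the middle (third) block. Thus $w = 0^a 1^b x 1^c 0^d$ or a similar pattern with the roles of $0$ and $1$ swapped in the outer blocks; write $w = A\,x\,B$ where $A$ is the prefix up to (not including) $x$ and $B$ is the suffix after $x$, and $w' = A\,x'\,B$ with $|x'|>|x|$.

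For task (1): since $w$ is not a palindrome pair and has five blocks with $x$ the unique maximum block occurring internally, I would argue that lengthening $x$ cannot create a palindrome-pair decomposition. Concretely, suppose $w' = P_1P_2$ with $P_1,P_2$ palindromes. A palindrome spanning part of the long block $x'$ is heavily constrained: if neither $P_1$ nor $P_2$ contains all of $x'$, then the cut falls inside $x'$, and one checks that the reversal conditions on $P_1$ and $P_2$ force a corresponding decomposition of $w = A\,x\,B$ into two palindromes (the portion of $x'$ inside $P_1$ shrinks to the portion of $x$ inside the analogous factor, and symmetrically for $P_2$), contradicting that $w$ is not a palindrome pair. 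If instead one of $P_1,P_2$ — say $P_1$ — contains all of $x'$, then since $P_1$ is a palindrome beginning inside $A$ and ending inside or after $x'$, and $x'$ is the strict maximum block, the block of $x'$-letters must be "mirrored" by a block of equal length on the other side of $P_1$'s centre; but that mirror block lies in $A$ or $B$ and has length $<|x'|$ unless it is the other extreme block — a short case analysis on the at most four remaining blocks rules this out, again pulling back to a decomposition of $w$.

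For task (2): let $u$ be a proper factor of $w'$. If $u$ does not meet the block $x'$, or meets it but does not contain the extra letters we added, then $u$ is already a factor of $w$ (up to shrinking $x'$ back), hence a palindrome pair since $w$ is a minimal non palindrome pair. The remaining case is $u = A'\,x'\,B'$ where $A'$ is a suffix of $A$ and $B'$ a prefix of $B$, not both equal to $A$ and $B$ — so $u$ uses the full extended block but omits at least one letter from an outer block of $w'$. Here $u$ has at most four blocks, and by Lemma~\ref{3_block_all_pp} any word with at most three blocks is a palindrome pair; for the four-block case $A'\,x'\,B'$ one writes it explicitly (e.g.\ $1^{b'} x' 1^c 0^d$ with $b' \le b$, or the symmetric shapes) and exhibits the palindrome-pair decomposition directly, noting that a word of the form $1^i\,1^j\,0^k\,\cdots$ with only one "direction change" after the maximal run is trivially two palindromes.

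The main obstacle is the first task — showing that extending the block cannot accidentally produce a palindrome-pair factorization. The delicate point is the case where one palindrome $P_i$ swallows the entire enlarged block $x'$: one must use both the maximality of $x'$ among the blocks of $w'$ and the rigidity of the five-block shape to force the putative decomposition of $w'$ down to one of $w$. Lemma~\ref{max_5_blocks} is what makes this finite and tractable, since it bounds the structure to a handful of explicit block patterns on which the argument can be checked.
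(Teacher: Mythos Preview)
Your structural setup is wrong, and this breaks both tasks. You claim that a minimal non palindrome pair with an internal maximum block must have exactly five blocks, with $x$ the middle one, citing Lemma~\ref{3_block_all_pp}. But that lemma only covers words with at most \emph{three} blocks; four-block words can perfectly well fail to be palindrome pairs. For instance $001101$ (from the paper's own table) is a four-block minimal non palindrome pair whose internal block $11$ is a maximum block. And even among five-block examples the internal maximum block need not be central: in $010110$ the blocks are $0,1,0,11,0$ and the unique internal maximum block is the fourth block $11$, not the third. So the reduction ``$w=A\,x\,B$ with $x$ the middle of five blocks'' is not available, and the subsequent case analyses in both tasks rest on a false hypothesis.

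Beyond the setup, Task~1 has a second gap. In the ``cut inside $x'$'' case you assert that shrinking $x'$ back to $x$ yields a palindrome-pair decomposition of $w$. But if $P_1$ is a palindrome ending in a run of $1$'s from $x'$, deleting some of those $1$'s from the \emph{end only} does not in general leave a palindrome; you would have to remove symmetrically from both ends, which changes the prefix of $w$. The paper does not try to pull the decomposition back to $w$ at all: instead, using Lemma~\ref{max_5_blocks} it pins down the block pattern of $w'$ in this case (outer blocks of $1$'s, prefix $11$), observes that the two blocks of $0$'s must have different lengths (else $w$ would already be a palindrome pair), and then exhibits two \emph{proper factors} of $w$ --- a prefix $u$ ending at the last $0$ and $1^{-1}u$ --- that cannot both be palindrome pairs, contradicting minimality of $w$. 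For Task~2 the paper likewise works with the palindrome decomposition $u=ab$ of the \emph{corresponding} proper factor of $w$ and does a case split on whether $x$ lies inside $a$ or is split, again using minimality of $w$ to derive contradictions. You should rebuild the argument along these lines rather than assuming the rigid five-block shape.
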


\begin{proof}
	Clearly, the result will follow on repeated application if we
        show it holds when extending an internal maximum block by
        one. Let $w$ be a minimal non palindrome pair with an internal
        maximum block $x$. Note $|x| \geq 2$. Let $w'$ be the word
        resulting from extending $x$ by one and let $x'$ be the extension of $x$ in $w'$.
	
	We first show that $w'$ is not a palindrome pair. Suppose to
        the contrary and write $w' = A'B$ for palindromes $A'$ and
        $B$. Assume without loss of generality that $A'$ contains at
        least half of $x'$. We may then write $w = AB$ where $A$ is
        $A'$ with one symbol of $x'$ removed. Suppose for
        contradiction that $x'$ is contained in $A'$. Since $x$ was a
        maximum block in $w$ it follows that $x'$ is unioccurrent in
        $w'$. Hence $x'$ is centred in $A'$. Clearly in this case $w$
        is also a palindrome pair, a contradiction. We conclude that
        $x'$ is split between $A'$ and $B$. It then follows from a
        similar argument that neither $A'$ nor $B$ can be contained in
        $x'$. It now follows from Lemma \ref{max_5_blocks} applied to
        $w$ and the fact that $A'$ and $B$ are palindromes that
	
	\[
	w' = \lefteqn{
	\underbrace{\phantom{11\ldots 10\ldots 011\ldots 1}}_{A'}}11\ldots 10\ldots 0 
	\lefteqn{\overbrace{\phantom{11\ldots1 1\ldots 1}}^{x'}} 11\ldots1 
	\underbrace{1\ldots 10\ldots 0 1\ldots 1}_B
	\] and hence
	
	\[
	w = \lefteqn{
		\underbrace{\phantom{11\ldots 10\ldots 01\ldots 1}}_A}11\ldots 10\ldots 0 
	\lefteqn{\overbrace{\phantom{1\ldots1 1\ldots 1}}^{x}} 1\ldots1 
	\underbrace{1\ldots 10\ldots 0 1\ldots 1}_B
	\]	
	where $11$ is a prefix of $w'$, and hence also of $w$. Note that since $w$ is not a palindrome pair, the blocks of zeros in $A$ and $B$ cannot be equal. Let $u$ be the prefix of $w$ ending with the final zero of $B$. Since $w$ has prefix $11$, it follows that $u$ and $1^{-1} u$ are not both palindrome pairs. Since these are proper factors of $w$, we have a contradiction. This completes the proof that $w'$ is not a palindrome pair.
	
	We must now prove the minimality of $w'$. Suppose for contradiction that $w'$ has a proper factor $u'$ which is not a palindrome pair. Let $u$ be the corresponding factor in $w$ (note that $u' = u$ unless $u'$ contains $x'$). Since $u$ is a proper factor of $w$, it is a palindrome pair. Write $u = ab$ for palindromes $a$ and $b$. Since $u'$ is not a palindrome pair it follows that $x$ must be contained in $u$, but cannot be centred in either $a$ or $b$. Without loss of generality assume that at least half of $x$ is in $a$. We now consider two cases.
	
	\textbf{Case 1.} Suppose that $x$ is contained in $a$. Without
        loss of generality we then have by Lemmas~\ref{3_block_all_pp}
        and \ref{max_5_blocks} that either
	\[u = \underbrace{1\ldots1 0\ldots0 \overbrace{1\ldots1}^x } _ a \underbrace{0\ldots0} _ b\] 
	or
	\[u = \underbrace{0\ldots 0 1\ldots1 0\ldots0 \overbrace{1\ldots1}^x 0\ldots 0} _ a \underbrace{0\ldots0} _ b\]	
	First, assume the former. Since $1^{-1} u$ is a proper factor of $w$, and hence a palindrome pair, it follows that the blocks of zeros in $a$ and $b$ are equal. This contradicts the assumption that $u'$ is not a palindrome pair.
	
	In the latter case, Lemma \ref{max_5_blocks} implies that $w$
        consists of 5 blocks.  Therefore, $w$ can be obtained by
        adding some number of zeros to the beginning and end of $u$.
        However, adding such zeros to the ends of $u$ preserves the
        property of being a palindrome pair.  Consequently, $w$ is a
        palindrome pair, a contradiction. This completes the case.
	
	\textbf{Case 2.} Suppose that $x$ is split between $a$ and $b$. Since $u'$ is not a palindrome pair and by Lemma \ref{max_5_blocks} we may write
	\[
	u = \lefteqn{
		\underbrace{\phantom{11\ldots 10\ldots 01\ldots 1}}_a}11\ldots 10\ldots 0 
	\lefteqn{\overbrace{\phantom{1\ldots1 1\ldots 1}}^{x}} 1\ldots1 
	\underbrace{1\ldots 10\ldots 0 1\ldots 1}_b
	\]		
	where the blocks of zeros in $a$ and $b$ are not equal. Let $y$ be the prefix of $u$ ending with the last zero of $b$. Note that $y$ and $1^{-1}y$ cannot both be palindrome pairs, but are both proper factors of $w$. This is a contradiction, which completes the proof.
\end{proof}

\begin{lemma}
	\label{mnpp_delete}
	The word resulting from deleting one digit from an internal maximum block of a minimal non palindrome pair of length at least $7$ is itself either a minimal non palindrome pair or is one of $01^n01^n0$ or $10^n10^n1$ for some integer $n\geq1$.
\end{lemma}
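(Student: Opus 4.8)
The plan is to first determine the possible forms of $w$ and then verify the conclusion case by case. The class of minimal non palindrome pairs is closed under exchanging $0$ with $1$ and under reversal; both operations commute with deleting a symbol from an internal maximum block (for reversal one deletes from the mirror-image block), and both map the set $\{01^n01^n0,\ 10^n10^n1\}$ into itself. Hence it suffices to treat one representative of each symmetry class of $w$. By Lemma~\ref{max_5_blocks} the word $w$ has at most five blocks; since $w$ is not a palindrome pair it has at least four (Lemma~\ref{3_block_all_pp}); and since $w$ has an internal maximum block, that block has length at least $2$.

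The first ingredient is a description of short palindrome pairs by their block lengths, obtained by a brief case analysis on where the splitting palindrome ends: a four-block word $0^a1^b0^c1^d$ is a palindrome pair if and only if $a=c$ or $b=d$, and a five-block word $0^a1^b0^c1^d0^e$ is a palindrome pair if and only if $b=d$ or $a+e=c$ (and similarly with the letters exchanged). Combining these criteria with the minimality of $w$ — the useful moves being ``remove the first symbol'', ``remove the last symbol'', ``remove an end block'', together with inspection of a few explicit short factors — one finds, after treating the subcases and the cases where two internal blocks have equal length, that (up to the symmetries) $w$ is one of
\[
0\,1^b\,0^c\,1\ (b,c\ge 2),\qquad 0\,1\,0^c\,1^2\ (c\ge 2),\qquad 0\,1^p\,0\,1^q\,0\ (p,q\ge 1,\ p\ne q).
\]
Conversely, every word of one of these three forms is a minimal non palindrome pair, which follows by checking — again from the criteria above — that each of its proper factors with at least four blocks is a palindrome pair. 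I expect this classification to be the main obstacle: one must manage several subcases, and in each one exhibit the non-palindrome-pair proper factor that minimality forbids, which is what pins down the exact shape.

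It then remains to delete one symbol from the chosen internal maximum block in each of the three forms. For the first two forms, under the hypothesis $|w|\ge 7$ the shrunken word is again of that same form and hence again a minimal non palindrome pair; for $|w|=6$ the shrunken word would instead be a non-exceptional palindrome pair (for instance $01001$), and this is exactly where the hypothesis is used. For the third form, taking the representative with $p>q$ and deleting a $1$ from the block $1^p$ yields $0\,1^{p-1}\,0\,1^q\,0$; this is again a five-block minimal non palindrome pair of the third form unless $p-1=q$, in which case it equals $0\,1^q\,0\,1^q\,0=0\,1^n\,0\,1^n\,0$ with $n=q$, and under the letter exchange this accounts for $10^n10^n1$ as well. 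This covers all cases and completes the proof.
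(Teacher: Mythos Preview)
Your approach is correct but takes a genuinely different route from the paper. The paper never classifies the minimal non palindrome pairs before proving this lemma; instead it lets $w'$ be an arbitrary minimal non palindrome pair with internal maximum block $x'$, forms $w$ by deleting one symbol from $x'$, and argues directly. First it supposes $w=AB$ is a palindrome pair and, using only Lemma~\ref{max_5_blocks} to bound the number of blocks, analyses the two cases ``$x$ contained in $A$'' and ``$x$ split between $A$ and $B$'' to reach a contradiction with the minimality of $w'$ or the hypothesis $|w'|\ge 7$. Then it supposes some proper factor $u$ of $w$ fails to be a palindrome pair and runs the same two-case analysis on the corresponding factor $u'=ab$ of $w'$.

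You instead front-load the work: you derive from Lemmas~\ref{3_block_all_pp} and \ref{max_5_blocks}, together with your explicit four- and five-block palindrome-pair criteria, a complete list (up to the $0\leftrightarrow 1$ and reversal symmetries) of minimal non palindrome pairs having an internal maximum block, namely $01^b0^c1$ ($b,c\ge 2$), $010^c1^2$ ($c\ge 2$), and $01^p01^q0$ ($p\ne q$). This classification is correct, and once it is in hand the deletion step is a short case check, with the hypothesis $|w|\ge 7$ used exactly where you say. In effect you are proving the heart of the paper's main theorem (restricted to the internal-maximum-block case) \emph{before} this lemma rather than after it; the paper uses this lemma as one of the ingredients that feed into that classification. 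Your route costs more in the subcase analysis for the classification but renders the lemma itself almost immediate; the paper's route is leaner for the lemma in isolation but defers the structural picture to the final theorem.
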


\begin{proof}
	Let $w'$ be a minimal non palindrome pair of length at least
        $7$ with an internal maximum block $x'$ (of ones, say). Note
        $|x'| \geq 2$. Let $w$ and $x$ be obtained from $w'$ and $x'$
        respectively by deleting one digit from $x'$.  If $w$ is of
        the form $01^n01^n0$ we are done, so suppose this is not the case.
	
	We first prove that $w$ is not a palindrome pair. Suppose to
        the contrary and write $w = AB$ for palindromes $A$ and
        $B$. Without loss of generality assume that at least half of
        $x$ is in $A$. Since $w'$ is not a palindrome pair it follows
        that $x$ is not centred in either $A$ or $B$, nor is either $A$ or $B$ contained in $x$. There are two cases to now consider.
	
	\textbf{Case 1.} Assume that $x$ is contained in $A$. By
        Lemmas~\ref{3_block_all_pp} and \ref{max_5_blocks} there are without loss of generality two further possibilities. Either
	\[w = \underbrace{1\ldots1 0\ldots0 \overbrace{1\ldots1}^x } _ A \underbrace{0\ldots0} _ B\]
	or
	\[w = \underbrace{0\ldots 0 1\ldots1 0\ldots0 \overbrace{1\ldots1}^x 0\ldots 0} _ A \underbrace{0\ldots0} _ B\]
	
	First, assume the former. Since $w'$ is not a palindrome pair it follows that the blocks of zeros in $w'$ are not equal. Since $1^{-1}w'$ is a proper factor of $w'$, and hence a palindrome pair, it follow that $w' = 10\ldots 11 0\ldots0$. Since $x'$ is maximum in $w'$ it then follows that $|w'| = 6$, a contradiction.
	
	Next, assume the latter. Note that since $w'$ is not a
        palindrome pair, the blocks of ones in $w'$ have different
        lengths. Let $v$ be the suffix of $w'$ starting with the first
        one of $w'$. Since $v$ is a proper factor of $w'$, and hence a
        palindrome pair, it follows that the blocks of zeros in $v$
        are equal. Applying a similar argument to the first four
        blocks of $w'$ we get that the first two blocks of zeros in
        $w'$ are also equal. Since $A$ is a palindrome it then follows
        that $B = \varepsilon$. Since $0^{-1}w'$ is a palindrome pair
        it follows that the blocks of zeros in $w'$ have length
        $1$. Thus $w' = 0 1^{|x'|-1}0 1^ {|x'|}0$, and hence $w = 0
        1^{|x'|-1}0 1^ {|x'|-1}0$, which is a contradiction.  This completes the case.
	
	\textbf{Case 2.} Assume $x$ is split between $A$ and $B$. By Lemma \ref{max_5_blocks} it follows that 
	\[
	w = \lefteqn{ \underbrace{ \phantom{1 \ldots 10\ldots 01 \ldots 1 }}_A} 1\ldots 10\ldots 0 
	\lefteqn{\overbrace{\phantom{1\ldots1 1\ldots 1}}^{x}} 1\ldots1 
	\underbrace{1\ldots 10\ldots 0 1\ldots 1}_B
	\]
	
	Since $w'$ is not a palindrome pair it follows that the blocks of zeros in $w'$ are not equal. Thus, the proper factor of $w'$ obtained from $w'$ by deleting its final block of ones is not a palindrome pair. This contradiction completes the proof that $w$ is not a palindrome pair.
	
	It remains to prove that $w$ is minimal. Suppose that some proper factor $u$ of $w$ is not a palindrome pair. Let $u'$ be the corresponding proper factor of $w'$. Since $u'$ is a palindrome pair, while $u$ is not, $u$ must contain $x$. Write $u' = ab$ for palindromes $a$ and $b$. Without loss of generality we may assume that at least half of $x'$ is in $a$. Clearly, $x'$ is not centred in either $a$ or $b$. By Lemma \ref{max_5_blocks} without loss of generality we again consider two cases.
	
	\textbf{Case 1.} Assume that $x'$ is contained in $a$. Then either 
	\[u' = \underbrace{1\ldots1 0\ldots0 \overbrace{1\ldots1}^{x'} } _ a \underbrace{0\ldots0} _ b\]
	or
	\[u' = \underbrace{0\ldots 0 1\ldots1 0\ldots0 \overbrace{1\ldots1}^{x'} 0\ldots 0} _ a \underbrace{0\ldots0} _ b\]
	
	First, assume the former. Since $u$ is not a palindrome pair it follows that the blocks of zeros in $u'$ are not equal. Thus $1^{-1}u'$ is a proper factor of $w'$ which is not a palindrome pair, a contradiction.
	Next, assume the latter. Lemma \ref{max_5_blocks} implies that
        $w'$ is obtained from $u'$ by adding some number of zeros to
        the beginning and end of $u'$.  However, this would mean that
        $w'$ is a palindrome pair, which is a contradiction. This contradiction completes the proof.
\end{proof}

One final Lemma is needed for our main result.

\begin{lemma}
	\label{mnpp_without_max_int_blk}
	The only minimal non palindrome pairs which do not have internal maximum blocks are $11(01)^n00$ and $00(10)^n11$ for each $n\geq1$.
\end{lemma}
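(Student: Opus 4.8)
The plan is to determine the structure of a minimal non palindrome pair $w$ under the standing assumption that $w$ has no internal maximum block. First I would note that by Lemma~\ref{3_block_all_pp} such a $w$ has at least four blocks, and I would try to show it has exactly four. Indeed, if $w$ has at least five blocks then it has at least three internal blocks; since by hypothesis none of the internal blocks attains the maximum length, the maximum block must be the first or the last block of $w$ (and, by the reversal symmetry of being a palindrome pair, we may take it to be the last). I would then look at a four-block proper factor of the form $b_1\,b_2\,b_3\,b_4$ lying in the interior and show, using that the middle blocks $b_2,b_3$ are strictly shorter than the final maximum block, that the palindrome-pair condition on this factor and on the factor obtained by chopping one symbol off an end forces $b_2$ and $b_3$ to be short (length $1$), just as in the proof of Lemma~\ref{max_5_blocks}. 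Pushing this along the word and using minimality (each proper factor is a palindrome pair but $w$ itself is not) I expect to pin down all internal blocks as length-$1$ blocks, and then the final and initial blocks as the ``large'' blocks; the resulting $w$ with more than four blocks would then be checkable to be a palindrome pair, a contradiction. Hence $w$ has exactly four blocks.

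Next, with $w$ having exactly four blocks, write $w = 0^i 1^j 0^k 1^\ell$ (the case starting with $1$ being symmetric, giving the $00(10)^n11$ family). By hypothesis neither internal block is a maximum block, so $\max(i,\ell) > \max(j,k)$; in particular the maximum is achieved only at the ends, and after possibly reversing we may assume $\ell \ge i$ and $\ell > j$, $\ell > k$, and likewise handle $i$. I would then use the two ``peeling'' facts: deleting the first symbol of $w$ gives a proper factor $0^{i-1}1^j0^k1^\ell$ (or $1^j 0^k 1^\ell$ if $i=1$) which must be a palindrome pair, and deleting the last symbol gives $0^i 1^j 0^k 1^{\ell-1}$, also a palindrome pair. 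Analyzing when a four-block (or three-block) word of these shapes is a product of two palindromes is elementary but needs care: a four-block word $0^{a}1^{b}0^{c}1^{d}$ is a palindrome pair essentially only when it splits as a palindrome times a palindrome at a block boundary or inside a block in a way forcing $a=c$ or $b=d$ (or one of the outer blocks is absorbed). Combining the constraints coming from peeling on the left, peeling on the right, and $w$ itself \emph{not} being a palindrome pair, I expect the only surviving possibility to be $i=j=\ell+1$? — more precisely $j = k$ impossible by maximality unless these are $1$, leading after the dust settles to $i = 2$, $\ell = 2$, and $j = k = n$ with the extra single symbols arranged as $11(01)^n00$.

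Concretely, I would argue: $w=0^i1^j0^k1^\ell$ is not a palindrome pair, but $0^{-1}w$, $w1^{-1}$, and every internal factor are. From $w1^{-1} = 0^i1^j0^k1^{\ell-1}$ being a palindrome pair and the block structure, get a relation among $i,j,k$; from $0^{-1}w$ being a palindrome pair get another; the failure of $w$ itself rules out the ``coincidence'' that would make all of them simultaneously splittable, which forces the short blocks to be length $1$ on the 0-side. That is, $i = 2$ and $k = 1$ — wait, I would instead find $w = 1^2(01)^n0^2$ reading from the other end, i.e. after the reduction the word is $11\,0\,1\,0\,1\cdots 0\,1\,00$, which is exactly $11(01)^n00$; checking directly that $11(01)^n00$ is indeed a minimal non palindrome pair (it is not a palindrome pair since any split leaves an unbalanced pair of outer double-blocks, yet every proper factor has at most three blocks or is visibly a palindrome pair) closes the argument. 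The main obstacle I anticipate is the bookkeeping in the four-block case: enumerating exactly which four-block words $0^a1^b0^c1^d$ are palindrome pairs and then intersecting the constraints from the two one-symbol deletions without missing a degenerate sub-case (e.g. when a deletion drops the word to three blocks, which is \emph{always} a palindrome pair and hence gives no information). I would organize this as a short sub-lemma classifying palindrome-pair-ness of four-block words, and then the rest is a finite case check.
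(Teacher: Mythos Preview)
Your plan has a structural error that would make the argument fail. You aim to show that a minimal non palindrome pair $w$ with no internal maximum block must have \emph{exactly four} blocks, obtaining a contradiction in the $\geq 5$-block case. But count the blocks of the target words: $11(01)^n00$ has blocks $11,\,0,\,1,\,0,\,\ldots,\,1,\,00$, i.e.\ $2n+2$ blocks, so for every $n\geq 2$ it has at least six blocks. Thus the $\geq 5$-block case cannot possibly end in a contradiction; it is precisely where most of the desired family lives. Your own write-up reflects this tension: in the four-block analysis $w=0^i1^j0^k1^\ell$ you ``wait'' and then produce $1^2(01)^n0^2$, which for $n\geq 2$ is not a four-block word at all.

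The fix is to reorganize the two cases so that the many-block case yields the conclusion rather than a contradiction. Concretely: assume (after symmetry) the first block $b_0$ is a maximum block of $1$'s; first rule out any internal block $00$ (using that the shortest prefix ending in $00$ must be a palindrome pair, forcing it to be $b_000$, and then a short four-block prefix forces $w=110100$); with all internal zero-blocks of length $1$, in the $\geq 5$-block case peel off $b_0$ from successive prefixes $b_00b_10\cdots b_j0$ to get that all internal one-blocks $b_1,b_2,\ldots$ have equal length, and then a suffix argument forces that common length to be $1$; finally, the factor $11(01)^r00$ now visibly sits inside $w$ and is itself not a palindrome pair, so by minimality $w$ equals it. The four-block case then only needs to recover $110100$, i.e.\ the $n=1$ member of the family, not the whole family. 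Your peeling idea is right in spirit, but the overall case split and its intended outcomes need to be corrected before the details can be filled in.
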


\begin{proof}
	Clearly, these words are minimal non palindrome pairs. Let $w$
        be a minimal non palindrome pair with no internal maximum
        blocks that is not one of the words given in the statement of the lemma. From left to right, let $b_i$ be the $i$th block of ones in $w$. Assume without loss of generality that $b_0$  is both a prefix and a maximum block of $w$. We prove that $w$ does not have an internal block of zeros with length at least $2$. Suppose to the contrary that such a block exists. Let $u$ be the shortest prefix of $w$ with suffix $00$. Since $u$ is a proper factor of $w$, and hence a palindrome pair, it then follows $u= b_0 00$. Since $w$ is not a palindrome pair it follows that $w$ has at least four blocks. Let $v$ be the smallest four block prefix of $w$. We have $v = b_0 0\ldots 0 b_1 0$, which is not a palindrome pair. Thus, $v = w$. Now, since $1^{1-|b_0|}v$ is a proper factor of $w$, and hence a palindrome pair, it follows that $|b_1| = 1$. Finally, since $1^{-1}v$ is a proper factor of $w$, and hence a palindrome pair, it follows that $b_0 = 11$, and hence since $b_0$ is maximum and $w$ is not a palindrome pair, that $w = 110100$. We may conclude that $w$ has no internal block of at least two zeros. We now consider two cases.
	
	\textbf{Case 1.} Assume $w$ has at least five blocks. Thus,
        $y_0 = b_0 0 b_1 0 b_2$ is a prefix of $w$. Since $w$ is not a
        palindrome pair it follows that $y_1 = y_0 0$ is also a prefix
        of $w$. Since ${b_0}^{-1} y_1$ is a proper factor of $w$, and
        hence a palindrome pair, it follows that $|b_1| = |b_2|$.
        Since $w$ is not a palindrome pair it then follows that
        $y_2 = y_1 b_3 0$ is a prefix of $w$. We then argue as before
        to show that $|b_3| = |b_2|$, and continue until we reach the
        final block of $w$. Since $w$ is not a palindrome pair, we
        have that $w = b_0 0 b_1 0 \ldots b_k 0\ldots 0$ for some
        integer $k\geq1$ (note that $00$ is a suffix of $w$ and
        $|b_i| = |b_j|$ for any $1\leq i < j \leq k$). If $|b_k| >1$,
        then the suffix of $w$ starting with the last $1$ of $b_{k-1}$
        is a proper factor of $w$, but not a palindrome pair, a
        contradiction. Thus all internal blocks of ones have length
        $1$. Thus $z = 11(01)^r 00$ is a factor of $w$ for some
        $k\geq1$, and hence, as $z$ is not a palindrome pair, $z =
        w$. This completes the case.
	
	\textbf{Case 2.} Assume now that $w$ has at most four blocks. Thus \[w = b_0 0 b_1 0\ldots 0\]
	
	Since $w$ is not a palindrome pair it follows that $00$ is a suffix of $w$. Since $10 b_1 00$ is a proper factor of $w$, and hence a palindrome pair, it follows that $|b_1| = 1$. Thus $w$ contains, and is hence equal to, $110100$. This completes the proof.
	
\end{proof}

We now construct the class of minimal non palindrome pairs inductively. It is easily verified that every word of length five or less is a palindrome pair. The minimal non palindrome pairs of length six are given below. For $i\geq 7$, to generate the minimal non palindrome pairs of length $i$:
\begin{itemize}
	\item Extend any internal maximum block in a minimal non palindrome pair of length $i-1$ by one.
	
	\item If $i$ is even, add the words
          $11(01)^{\frac{i-4}{2}}00$, $0 1^\frac{i-4}{2} 0
          1^{\frac{i-4}{2}+1} 0$, $1 0^\frac{i-4}{2} 1
          0^{\frac{i-4}{2}+1} 1$, $01^{\frac{i-2}{2}}0^{\frac{i-2}{2}}1$ and their reverses.
\end{itemize}

The minimal non palindrome pairs for the first few $i$ are given in the following table. For conciseness, reverses and words with prefix $1$ have been omitted.

\begin{table}[h!]
\begin{center}
\begin{tabular}{|l||c|c|c|c|c|}
\hline
	i & 6 & 7 & 8 & 9 & 10 \\ \hline
	  & 001011 & 0011101 & 00101011 & 001111101 & 0010101011\\
	  & 001101 & 0100011 & 00111101 & 010000011 & 0011111101\\
	  & 010011 & 0101110 & 01000011 & 010111110 & 0100000011\\
	  & 010110 & 0110001 & 01011110 & 011000001 & 0101111110\\
	  & 011001 & 0111001 & 01100001 & 011011110 & 0110000001\\
	  &		   &   		 & 01101110 & 011100001 & 0110111110\\
	  &		   &   		 & 01110001 & 011110001 & 0111000001\\
	  &		   &   		 & 01111001 & 011111001 & 0111011110\\
	  &		   &		 &		    &			& 0111100001\\
	  &		   &		 &		    &			& 0111110001\\
	  &		   &		 &		    &                  & 0111111001\\
\hline
\end{tabular}
\caption{List of short minimal non palindrome pairs}
\end{center}
\end{table}

This leads us to our main result.

\begin{theorem}
	The minimal non palindrome pairs are exactly those words described above.
\end{theorem}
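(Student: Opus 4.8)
The plan is an induction on length showing that the set $S$ of words produced by the construction coincides with the set of minimal non palindrome pairs.

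\emph{$S$ consists of minimal non palindrome pairs.} That every word of length $\le 5$ is a palindrome pair, and that the displayed list is exactly the set of minimal non palindrome pairs of length $6$, are finite checks. For $i \ge 7$: the first bullet produces minimal non palindrome pairs by Lemma~\ref{mnpp_add}; the words $11(01)^n00$ and $00(10)^n11$ are minimal non palindrome pairs by Lemma~\ref{mnpp_without_max_int_blk}; and for each of $01^n01^{n+1}0$, $10^n10^{n+1}1$, $01^m0^m1$, $10^m1^m0$ and their reverses one checks directly that it is not a palindrome pair (a short analysis of its palindromic prefixes and suffixes) while each of its proper factors either has at most three blocks or is an evident product of two palindromes.

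\emph{Every minimal non palindrome pair lies in $S$.} I induct on $n=|w|$; for $n \le 6$ this is the base check, so let $n \ge 7$ and split on the maximum blocks of $w$. If $w$ has no internal maximum block, Lemma~\ref{mnpp_without_max_int_blk} gives $w \in\{11(01)^k00,\,00(10)^k11\}$ with $n=2k+4$, which is in $S$. Next suppose $w$ has a \emph{unique} maximum-length block $x$; then $x$ is internal, and deleting one digit from $x$ gives, by Lemma~\ref{mnpp_delete}, a word $w''$ of length $n-1\ge 6$ that is either a minimal non palindrome pair or one of $01^k01^k0$, $10^k10^k1$. Since $x$ was strictly the longest block of $w$, its image $x'$ in $w''$ has length $|x|-1$, is internal, and is at least as long as every other block of $w''$, so $x'$ is an internal maximum block of $w''$. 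If $w''$ is a minimal non palindrome pair, then $w''\in S$ by induction and $w$ arises from $w''$ by extending the internal maximum block $x'$ by one, so $w\in S$. If $w''=01^k01^k0$ (resp.\ $10^k10^k1$), then $k\ge2$ since $|w''|\ge6$, the internal maximum blocks of $w''$ are its two middle $1$-blocks (resp.\ $0$-blocks), $x'$ is one of them, and so $w\in\{01^{k+1}01^k0,\,01^k01^{k+1}0\}$ (resp.\ the analogous $0$-versions), all in $S$.

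The remaining case — $w$ has an internal maximum block and at least two blocks of the maximum length $m\ge2$ — is the crux, and the one place where the deletion/extension lemmas do not apply: extending an internal maximum block always creates a block strictly longer than all others, so a minimal non palindrome pair with two maximal blocks cannot arise that way and must be produced explicitly. I claim such a $w$ must be $01^m0^m1$ or $10^m1^m0$ (hence $n=2m+2$), which lie in $S$. By Lemma~\ref{max_5_blocks}, $w$ has four or five blocks, and one works through the finitely many positions of the maximal blocks, using throughout that $w$ is not a palindrome pair while all its proper factors are. For four blocks, two central maximal blocks force (after killing longer outer blocks via bad proper factors) $w=01^m0^m1$; configurations with an external maximal block survive only at length $6$, giving $001011$ and $001101$ up to reversal and complement, which are already on the base list. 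For five blocks, each configuration with two maximal blocks is seen either to make $w$ itself a palindrome or to yield a proper factor that is not a palindrome pair — the recurring computations being that $0^p1^m0^m1^q$ with $p,q\ge1$ is a palindrome pair exactly when $p=m$ or $q=m$, and that a word of shape $0^m1^{*}0^m1^{*}0^{*}$ always has a long palindromic suffix — so no five-block word survives. Settling this crux completes the induction, and hence the theorem; the delicate bookkeeping in this last case is the only real obstacle.
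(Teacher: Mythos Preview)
Your overall structure---induction on length using Lemmas~\ref{mnpp_add}, \ref{mnpp_delete}, \ref{mnpp_without_max_int_blk} and \ref{max_5_blocks}---matches the paper's. The real difference is in how the ``several maximum blocks'' case is dispatched. Where you propose to enumerate four- and five-block configurations and eliminate them one by one (work you yourself flag as ``the only real obstacle'' and leave only sketched), the paper instead splits by the number of \emph{internal} maximum blocks (none, exactly one, more than one) and in the last case makes a single observation: if $w'$ has more than one internal maximum block, then by Lemma~\ref{max_5_blocks} it has at most five blocks, so two of those internal maximum blocks are adjacent and of opposite letters, and hence $w'$ contains a factor $01^k0^k1$ (or its complement) with $k\ge 2$; this factor is itself not a palindrome pair, so minimality forces $w'=01^k0^k1$. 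That one line replaces your entire crux.

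Your split (unique maximum block versus several) is in a sense finer than the paper's, since it isolates the sub-situation ``one internal maximum block together with an external one of the same length,'' which the paper's Case~2 wording does not explicitly address. But you then have to dispose of that sub-situation by hand inside your case~3, and the hints you give there (``configurations with an external maximal block survive only at length~$6$'', the asserted behaviour of $0^m1^{*}0^m1^{*}0^{*}$, etc.) are not yet a proof---for instance your five-block remark about a ``long palindromic suffix'' does not hold as stated. Either complete that bookkeeping carefully, or adopt the paper's case split and its $01^k0^k1$-factor argument, which avoids the enumeration entirely.
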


\begin{proof}
  It is easily verified that those words of length six given in the
  table, as well as those described in part two of the construction
  above are minimal non palindrome pairs. It then follows from Lemma
  \ref{mnpp_add} that all the constructed words are also minimal non
  palindrome pairs.
	
  To show that these are the only minimal non palindrome pairs, let
  $w'$ be a minimal non palindrome pair of length $i \geq 7$.  If $w'$
  has no internal maximum blocks, then by
  Lemma~\ref{mnpp_without_max_int_blk}, $w'$ is either
  $11(01)^{\frac{i-4}{2}}00$ or its reverse, and is therefore
  accounted for by our construction.

  If $w'$ has exactly one internal maximum block, then $w'$ can be
  obtained by extending an internal maximum block in some word $w$ of
  length $i-1$.  By Lemma~\ref{mnpp_delete} $w$ is either a minimal
  non palindrome pair, or is one of
  $0 1^\frac{i-4}{2} 0 1^{\frac{i-4}{2}} 0$ or
  $1 0^\frac{i-4}{2} 1 0^{\frac{i-4}{2}} 1$.  Again $w'$ is accounted
  for by our construction.

  If $w'$ has more than one internal maximum block, then without loss
  of generality $w'$ contains a factor $01^k0^k1$ for some $k \geq 2$.
  However this factor is not a palindrome pair.  It follows that this
  factor is not a proper factor and hence that
  $w' = 01^{\frac{i-2}{2}}0^{\frac{i-2}{2}}1$.  Again $w'$ is accounted
  for by our construction.  This concludes the proof.
\end{proof}

\begin{corollary}
	Let $npp(i)$ be the number of minimal non palindrome pairs of length $i$. Then
	\[npp(i) = \begin{cases}
	0 & i<6\\
	4i - 12 & i\geq 6, ~i \text{ even}\\
	npp(i-1) & i\geq6, ~i \text{ odd}
	\end{cases}\]
\end{corollary}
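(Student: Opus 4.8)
The plan is to read off $npp(i)$ from the inductive description of the minimal non palindrome pairs given by the construction above, using the structural information in Lemmas~\ref{max_5_blocks}, \ref{mnpp_add}, \ref{mnpp_delete} and \ref{mnpp_without_max_int_blk}. First I would dispose of the easy ranges: for $i<6$ every binary word of length $i$ is a palindrome pair, so $npp(i)=0$; and a direct enumeration gives $npp(6)=12=4\cdot 6-12$ (the twelve words being the five in the table together with their reversals and complements --- note that the class of minimal non palindrome pairs is closed under reversal and under exchanging $0$ and $1$, since the complement of a palindrome is a palindrome and both operations preserve the property of being a proper factor).

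For odd $i\ge 7$ the goal is $npp(i)=npp(i-1)$. The key observation, read from the proof of the preceding theorem, is that a minimal non palindrome pair of \emph{odd} length has exactly one internal maximum block: the block-free words $11(01)^{m}00$ of Lemma~\ref{mnpp_without_max_int_blk}, and the words $01^{k}0^{k}1$ arising in the ``more than one internal maximum block'' case, both have even length, so for odd length neither occurs. Consequently, by the preceding theorem (and by Lemma~\ref{mnpp_delete}, whose exceptional outputs $01^{k}01^{k}0$ and $10^{k}10^{k}1$ have odd length $2k+3$ and so cannot arise as a word of even length $i-1$), every minimal non palindrome pair of length $i$ is obtained by lengthening, by one, some internal maximum block of a minimal non palindrome pair $w$ of length $i-1$. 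This assignment from pairs $(w,\text{block})$ to length-$i$ words is injective, because lengthening an internal maximum block makes it the unique strictly longest block of the result, whence $w$ and the block are recovered by shrinking that block; and it is surjective by the above. So $npp(i)$ equals the number of such pairs, i.e.\ $\sum_{w}(\text{number of internal maximum blocks of }w)$ over minimal non palindrome pairs $w$ of length $i-1$. Since $i-1$ is even, Lemma~\ref{mnpp_without_max_int_blk} and the preceding theorem partition these $w$ into the two words $11(01)^{(i-5)/2}00$, $00(10)^{(i-5)/2}11$ with no internal maximum block, the two words $01^{(i-3)/2}0^{(i-3)/2}1$, $10^{(i-3)/2}1^{(i-3)/2}0$ with exactly two internal maximum blocks, and the remaining $npp(i-1)-4$ words with exactly one; the sum is $0\cdot 2+2\cdot 2+1\cdot(npp(i-1)-4)=npp(i-1)$.

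For even $i\ge 8$ the goal is $npp(i)=npp(i-1)+8$, whence $npp(i)=4i-12$ follows by induction from $npp(6)=12$ and the odd case. By the construction, the minimal non palindrome pairs of length $i$ are the ones obtained by lengthening an internal maximum block of a minimal non palindrome pair of length $i-1$, together with the eight ``part two'' words $11(01)^{(i-4)/2}00$, $01^{(i-4)/2}01^{(i-4)/2+1}0$, $10^{(i-4)/2}10^{(i-4)/2+1}1$, $01^{(i-2)/2}0^{(i-2)/2}1$ and their reversals (which I would first verify are pairwise distinct, and are minimal non palindrome pairs, as observed in the proof of the preceding theorem). Since $i-1$ is now odd, each minimal non palindrome pair of length $i-1$ has exactly one internal maximum block, so by the injective lengthening map the first family has exactly $npp(i-1)$ members. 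The two families are disjoint: every word in the first family has a unique strictly longest block, of length $\ge 3$, while among the eight ``part two'' words the four of the form $11(01)^{(i-4)/2}00$, $00(10)^{(i-4)/2}11$, $01^{(i-2)/2}0^{(i-2)/2}1$, $10^{(i-2)/2}1^{(i-2)/2}0$ each have two longest blocks, and for the remaining four, typified by $01^{m}01^{m+1}0$ with $m=(i-4)/2$, the only block that could have been lengthened is $1^{m+1}$, whose shrinking yields the palindrome $01^{m}01^{m}0$ --- which is a palindrome pair, not a minimal non palindrome pair. Hence $npp(i)=npp(i-1)+8$.

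The main obstacle, I expect, is not any single step but the bookkeeping needed to make the two lengthening counts airtight --- in particular, confirming that the lengthening map is injective and (the genuinely delicate point) that the eight exceptional even-length words lie outside its image, the tricky case being $01^{m}01^{m+1}0$ and its reversal and complements, which do have a unique longest block and so superficially resemble lengthenings. Along the way I would also check the small facts used above --- that $01^{m}01^{m}0$ is a palindrome, and that any word produced by the lengthening operation has a unique strictly longest block of length at least $3$ --- and sanity-check $i=7$ and $i=8$ against the table.
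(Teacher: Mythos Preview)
Your proposal is correct and follows essentially the same approach as the paper: establish the base case $npp(6)=12$, then for odd $i$ observe that the two length-$(i-1)$ words with no internal maximum block and the two with two internal maximum blocks balance out so that $npp(i)=npp(i-1)$, and for even $i$ observe that every length-$(i-1)$ minimal non palindrome pair has exactly one internal maximum block and the construction adds eight fresh words, giving $npp(i)=npp(i-1)+8$. Your version is in fact more careful than the paper's, which does not explicitly verify the injectivity of the lengthening map or the disjointness of the eight ``part two'' words from its image; your handling of the delicate case $01^{m}01^{m+1}0$ (unique longest block, but shrinking it yields a palindrome rather than a minimal non palindrome pair) is exactly the point the paper leaves implicit.
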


\begin{proof}
One first checks that there are 12 minimal non palindrome pairs of
length 6.  Let $i > 6$ be odd.  Every minimal non palindrome pair
of length $i-1$ with exactly 1 internal maximum block produces one of
length $i$ by extending this internal maximum block.  The word
$11(01)^{\frac{i-5}{2}}00$ and its reversal produce no minimal non
palindrome pairs of length $i$, since they have no internal maximum
blocks to extend.  The word $01^{\frac{i-3}{2}}0^{\frac{i-3}{2}}1$ and
its reversal each produce two minimal non palindrome pairs, since they
each have two internal maximum blocks.  Thus there is no net increase
in the number of minimal non palindrome pairs when going from length
$i-1$ to $i$, and so we have $npp(i)=npp(i-1)$.

On the other hand, if $i$ is even, then every minimal non palindrome
pair of length $i-1$ produces one minimal non palindrome pair of length
$i$.  Additionally, our construction adds eight new words of length
$i$.  Thus there are
\begin{eqnarray*}
&&npp(i-1)+8 \\
&=& npp(i-2)+8 \quad\quad\text{(since $i-1$ is odd)}\\
&=& 4(i-2)-12+8 \quad\quad\text{(inductively)}\\
&=& 4i-12
\end{eqnarray*}
minimal non palindrome pairs of length $i$.
\end{proof}

\section{Conclusion}
Here we mention some interesting open questions raised by the previous
results.  The first one is an obvious one, although we shall
see shortly that the answer may, in fact, be ``no''.

\begin{question}
Does property {\bf PP} characterize some interesting class of words?
\end{question}

We can also define a complexity function based on palindrome pairs.
Recall that the \emph{factor complexity function} $C_{\bf w}(n)$
counts the number of factors of ${\bf w}$ of length $n$.  Similarly,
the \emph{palindrome complexity function} $P_{\bf w}(n)$ counts the
number of palindromic factors of ${\bf w}$ of length $n$.  We could
therefore define a \emph{palindrome pair complexity function} $PP_{\bf
  w}(n)$ that counts the number of factors of ${\bf w}$ of length $n$
that are palindrome pairs.  Property {\bf PP} could then be rephrased
as ``$C_{\bf w}(n) = PP_{\bf w}(n)$ for infinitely many $n$.''

\begin{problem}
Investigate the relationships between the functions $C_{\bf w}(n)$,
$P_{\bf w}(n)$, and $PP_{\bf w}(n)$.
\end{problem}

It is known that if ${\bf w}$ has linear factor complexity then its
palindromic complexity is bounded \cite{ABCD03}.  We have already seen
above that this is not true for the palindrome pair complexity.  The
results of Section~\ref{tm} give upper and lower bounds for the
palindrome pair complexity of the Thue--Morse word for certain values
of $n$.

\begin{problem}
Give explicit formulas for the palindrome pair complexity of words
such as the Fibonacci word and the Thue--Morse word.
\end{problem}

Further to the question posed above regarding whether property ${\bf
  PP}$ characterizes some interesting class of words, one might wonder
if, for instance, property {\bf PP} implies $O(n)$ factor complexity.
Unfortunately, this is not the case.  Suppose we perform the
construction of the word ${\bf p}$ in Proposition~\ref{shuffling_pals}
by defining the $w_i$'s as follows:  $w_i = B_iB_i^R$, where $B_i$ is
a \emph{de Bruijn sequence} of order $|p_i|+1$; that is, $B_i$
contains every binary word of length $|p_i|+1$ exactly once.  Then
${\bf p}$ contains at least $2^{|p_i|+1}$ factors of length
$(|p_i|+1)^2$.  If we set $n_i = (|p_i|+1)^2$, then we see that
$C_{\bf p}(n_i) \geq 2^{\sqrt{n_i}}$.  So there are words that are not
so ``nice'' that still have property ${\bf PP}$.

The following would show that property {\bf PP} is a property of
Sturmian words that does not carry over to \emph{episturmian words}.
In principle, it could be resolved using the Walnut Prover, as
described in the proof of Theorem~\ref{tm_pp}.

\begin{conjecture}
The Tribonacci word does not have property {\bf PP}.
\end{conjecture}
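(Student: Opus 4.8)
\section*{A proof proposal}

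The plan is to follow the strategy used for the Thue--Morse word in Theorem~\ref{tm_pp}: decide property {\bf PP} for the Tribonacci word ${\bf T}$ automatically with the Walnut Prover, working now in the Tribonacci numeration system instead of base~$2$. The word ${\bf T}$ is Tribonacci-automatic --- a finite automaton reading the (Zeckendorf-type) Tribonacci representation of $i$ outputs ${\bf T}[i]$, and Walnut knows an automaton for addition of Tribonacci representations --- so the same two formulae used for ${\bf t}$ transfer verbatim. First,
\[
\mathrm{palindrome}(i,j): (i=j) \vee \bigl((i<j) \wedge \forall k\; (i+k<j) \Rightarrow {\bf T}[i+k]={\bf T}[j-1-k]\bigr),
\]
and then
\[
\Phi(n): \forall i\; \exists j\; \mathrm{palindrome}(i,j) \wedge \mathrm{palindrome}(j,i+n),
\]
which says that every length-$n$ factor of ${\bf T}$ is a palindrome pair (note that the palindrome predicate already forces $i\le j\le i+n$, so no separate bound on the split point $j$ is needed). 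Feeding these to Walnut produces a finite automaton $A$ accepting exactly the Tribonacci representations of $S=\{n:\Phi(n)\}$, and the theorem is precisely the statement that $L(A)$ is finite, i.e.\ that $A$ has no accepting run through a cycle. One then reads off from $A$ the bound $N$ with $S\subseteq\{0,\dots,N\}$; as with ${\bf t}$, one expects $S$ to be a short finite list.

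The step I expect to be the main obstacle is the feasibility of the computation. In base $2$ the relevant automata stay small, but the Tribonacci adder is larger and the arithmetic less benign, and $\mathrm{palindrome}(i,j)$ already carries a universal quantifier; nesting it inside $\forall i\,\exists j$ forces Walnut to combine several automata over the Tribonacci digit alphabet, and intermediate machines can blow up in size or time. If a direct run is infeasible, one can try to help Walnut along by first proving by hand a structural description of the palindromic factors of ${\bf T}$ (there are at most three of each length) and encoding that, or by decomposing $\Phi$ using the $\sigma$-structure of ${\bf T}$. In any case, once the prover terminates, checking that the output language is finite is routine.

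As a fallback not relying on machine computation, one can try a combinatorial argument in the spirit of Section~\ref{sturmian}. The Tribonacci word is an Arnoux--Rauzy word, so for each $n$ it has a unique right special factor $r_n$ and a unique left special factor $\ell_n=r_n^{R}$, and its palindromic complexity is at most $3$ at every length, with an explicit description of the palindromic factors obtainable from the palindromic prefixes (whose lengths are governed by the directive sequence $(012)^\omega$). One would then exhibit, for every sufficiently large $n$, one of the $2n+1$ length-$n$ factors that cannot be written as a product of two palindromes --- most naturally a small perturbation of a palindromic prefix near an occurrence of the rarest letter $2$, mimicking Lemma~\ref{exc_non_pp}. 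The hard part of this route is the case analysis: with a ternary alphabet the two palindromes can sit relative to the central unioccurrent block in more configurations than in the binary Sturmian case, and one must additionally rule out the ``accidental'' palindrome pairs arising from the three palindromes available at each length.
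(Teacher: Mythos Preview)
The paper does not prove this statement; it is explicitly left as a \emph{conjecture}, with the remark that ``in principle, it could be resolved using the Walnut Prover, as described in the proof of Theorem~\ref{tm_pp}.'' So there is no paper proof to compare against. Your proposal is precisely the route the authors suggest: rewrite the two formulae from Theorem~\ref{tm_pp} with ${\bf T}$ in place of ${\bf t}$ and run Walnut over the Tribonacci numeration system, then check that the accepted language is finite. In that sense your plan and the paper's hint coincide exactly.

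What remains is that this is a \emph{plan}, not a proof. Until the computation is actually run and terminates with a finite language, the conjecture is unresolved; your own discussion of the potential state-space blow-up over the Tribonacci adder is the honest acknowledgement of this. The fallback combinatorial sketch in the Arnoux--Rauzy style is a reasonable direction but, as you note, the ternary case analysis is not carried out, so that route is also not a proof as written. In short: your proposal matches the paper's suggested method, but neither you nor the paper has completed the argument.
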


In Section~\ref{rich_pp} we already stated the problem of
characterizing the binary words of each length that have the maximum
possible number of distinct palindrome pair factors.
Of course, the most interesting open problem is to resolve the
Frid--Puzynina--Zamboni Conjecture.

\section*{Acknowledgments}
We would like to
thank Luke Schaeffer for his help with using the Walnut Prover.

\end{document}